\DeclareMathOperator{\inn}{Inn} 
 \DeclareMathOperator{\soc}{soc}
\DeclareMathOperator{\aut}{Aut}
 \DeclareMathOperator{\frat}{Frat}
\newtheorem{thm}{Theorem}
\newtheorem{cor}[thm]{Corollary}
 \newtheorem{lemma}[thm]{Lemma}
\newtheorem{prop}[thm]{Proposition} 
\newtheorem{question}[thm]{Question}
\numberwithin{equation}{section}
\renewcommand{\footnote}{\endnote}
\newcommand{\ignore}[1]{}\makeglossary
\begin{document}
	\bibliographystyle{amsplain}
	\title[The $A$-M\"{o}bius function of a finite group]{The $A$-M\"{o}bius function of a finite group}
	\author{Francesca Dalla Volta}
\address{Francesca  Dalla Volta, Dipartimento di Matematica e Applicazioni ,
	University of Milano-Bicocca, Via Cozzi 55, 20126 Milano, Italy} 
\email{francesca.dallavolta@unimib.it}
	\author{Andrea Lucchini}
 	\address{Andrea Lucchini, Universit\`a degli Studi di Padova\\  Dipartimento di Matematica \lq\lq Tullio Levi-Civita\rq\rq, Via Trieste 63, 35121 Padova, Italy}\email {lucchini@math.unipd.it}

		
\begin{abstract}
The M\"{o}bius function of the subgroup lettice of a finite group $G$ has been introduced by Hall  and applied to investigate several different questions. We propose the following generalization. Let $A$ be a  subgroup of the automorphism group $\aut(G)$ of a finite group $G$ and denote  by $\mathcal C_A(G)$ the set of $A$-conjugacy classes of subgroups of $G.$ For $H\leq G$ let
$[H]_A~=~\{~H^a ~\mid ~a\in ~A\}$ be the element of $\mathcal C_A(G)$ containing $H.$  We may define an ordering in $\mathcal C_A(G)$  in the following way:
$[H]_A\leq [K]_A$ if  $H^a\leq K$ for some $a\in A$. We 
consider  the M\"{o}bius function  $\mu_A$ of the corresponding poset and analyse its properties and possible applications.
	\end{abstract}
	\maketitle


\section{Introduction}
The M\"{o}bius function of a finite  partially ordered set (poset) $P$ is the map
$\mu_P:~ P\times P\to ~\mathbb Z$ 
satisfying $\mu_P(x,y) = 0$ unless $x \leq y,$ in which case it is defined inductively by the equations 
$\mu_P(x,x) = 1$ and $\sum_{x\leq z\leq y}\mu_P(x,z)=0$ for $x < y.$

In a celebrated paper \cite{hall}, P. Hall used for the first time the  M\"{o}bius function $\mu$ of the subgroup lattice of a finite group $G$ to investigate some properties of $G$, in particular to compute the number of generating $t$-tuples of $G$. A detailed investigation of the   properties  of the function $\mu$ associated to a finite group $G$  is given by T. Hawkes, I. M. Isaacs and M. \"{O}zaydin in \cite{isac}. In that paper, the authors also consider the M\"{o}bius function $\lambda$ of the poset  of conjugacy classes of subgroups of $G,$ where $[H] \leq [K]$ if 
$H \leq K^g$ for some $g\in G$  (see \cite[Section 7]{isac}). In particular, they propose the interesting and intriguing question of comparing the values of $\mu$ and $\lambda$.

In this paper we aim to generalize the definitions and main properties of the functions $\mu$ and $\lambda$ to a more general contest. 
Let $G$ and $A$ be a finite group and  a subgroup of the automorphism group $\aut(G)$ of $G$, respectively.  Denote by $\mathcal C_A(G)$ the set of $A$-conjugacy classes of subgroups of $G.$ For $H\leq G$ let
$[H]_A~=~\{~H^a ~\mid ~a\in ~A\}$ be the element of $\mathcal C_A(G)$ containing $H.$  We may define an ordering in $\mathcal C_A(G)$  in the following way:
$[H]_A\leq [K]_A$ if  $H^a\leq K$ for some $a\in A$; we
 consider  the M\"{o}bius function  $\mu_A$ of the corresponding poset. We will write $\mu_A(H,K)$ in place of $\mu_A([H]_A,[K]_A).$
When $A=\inn(G),$ we write $\mathcal C(G)$ and $[H]$, in place of 
$\mathcal C_{\inn(G)}(G)$ and $[H]_{\inn(G)}$.
When $A=1$, $\mu_A=\mu$ is the  M\"{o}bius function in the subgroup lattice of $G,$ introduced by P. Hall. In the case when
$A=\inn(G)$ is the group of the inner automorphism, then $\mu_{\inn(G)}=\lambda$. Note that for any subgroup $A$ of $\aut(G)$, we get $[G]_A=\{G\}$.

In Section \ref{crap}, we prove some general properties of $\mu_A$. In particular we prove the following result:

\begin{prop}\label{uguali}Let $G$ be a finite solvable group. If $G^\prime \leq K \leq G$ and $A$ is the subgroup of $\inn(G)$ obtained by considering the conjugation with the elements of $K,$ then $\lambda(H,G)=\mu_A(H,G)$ for any $H\leq G.$
\end{prop}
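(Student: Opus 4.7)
The plan is to prove $\mu_A(H,G) = \lambda(H,G)$ by downward induction on $[H]_A$ in $\mathcal{C}_A(G)$, starting from $[H]_A = [G]_A$ where both sides equal $1$. Since $G^\prime \leq K$, the subgroup $K$ is normal in $G$ and $G/K$ is abelian, so $A$-conjugacy of subgroups coincides with $K$-conjugacy and each $G$-class $[L]_G$ decomposes into exactly $[G:KN_G(L)]$ many $A$-classes. Moreover, $[L]_A > [H]_A$ strictly forces some $A$-conjugate of $H$ to be a proper subgroup of $L$, hence $|L|>|H|$ and $[L]_G > [H]_G$; so the inductive hypothesis $\mu_A(L,G)=\lambda(L,G)$ applies to every $L$ appearing in the dual M\"{o}bius recursion $\mu_A(H,G) = -\sum_{[L]_A > [H]_A} \mu_A(L,G)$.

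Substituting via the inductive hypothesis and regrouping the sum by $G$-class gives
\[
\mu_A(H,G) = -\sum_{[L]_G > [H]_G} n_{H,L}\,\lambda(L,G),
\]
where $n_{H,L}$ counts the $A$-classes in $[L]_G$ lying above $[H]_A$. Comparing with $\lambda(H,G) = -\sum_{[L]_G > [H]_G} \lambda(L,G)$, the proposition reduces to the identity $(n_{H,L}-1)\lambda(L,G) = 0$ for every $L$ with $[L]_G > [H]_G$. When $KN_G(L) = G$, the class $[L]_G$ is a single $A$-class; writing a witness $g$ of $H^g \leq L$ as $g = kn$ with $k \in K$ and $n \in N_G(L)$ yields $H^g = (H^k)^n$, whence $H^k \leq L^{n^{-1}} = L$, so $n_{H,L}=1$ and the term vanishes.

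The remaining case $KN_G(L) < G$, in which $L$ sits inside the proper normal subgroup $KN_G(L) \supseteq G^\prime$ of $G$, is where solvability is needed and is the principal technical obstacle. The task is to prove the auxiliary vanishing statement: $\lambda(L,G) = 0$ whenever $G^\prime N_G(L) < G$ and $G$ is solvable. My expected route is induction on $|G|$: choose a normal subgroup $N$ with $G^\prime N_G(L) \leq N < G$ and $[G:N]$ prime, and partition the $G$-classes $[M]_G \geq [L]_G$ according to whether $M \leq N$ or $MN = G$; the existence of supplements guaranteed by solvability should then pair these two families to force the required cancellation in the recursion defining $\lambda(L,G)$.
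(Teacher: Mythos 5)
Your reduction is sound as far as it goes: the downward induction, the count of $A$-classes inside a $G$-class as $|G:KN_G(L)|$, and the verification that $n_{H,L}=1$ when $KN_G(L)=G$ are all correct. But the proof is not complete, because everything has been funneled into the auxiliary claim that $\lambda(L,G)=0$ whenever $G$ is solvable and $G'N_G(L)<G$, and this claim is exactly where all of the content of the proposition lives --- it is not proved, only announced with an ``expected route.'' Worse, the route you sketch does not work. If $M$ is a normal subgroup of prime index containing $G'N_G(L)$, then $[M]$ is a conjunctive element of the interval above $[L]$ in $\mathcal C(G)$ (since any class either lies below $[M]$ or joins with it to $[G]$), and splitting the recursion for $\lambda(L,G)$ according to whether $X\leq M$ or $XM=G$ yields, after applying Lemma \ref{congi}, precisely the identity $\sum_{[L]\leq [X]\leq [M]}\lambda(L,X)=0$ --- which is the defining relation of $\lambda$ on the subinterval $[[L],[M]]$ and carries no information about $\lambda(L,G)$. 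Note also that the claim cannot be attacked through $G/G'$ alone: in $D_4$ with $L$ generated by a reflection one has $G'N_G(L)<G$, and $\lambda(L,G)=0$ holds only because $L$ fails to be an intersection of maximal subgroups, so any correct proof must engage with the interval structure (complements of chief factors), not just with supplements of a prime-index normal subgroup.

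The paper avoids this obstacle entirely by a different induction, on $|G|\cdot|G:H|$: take a minimal normal subgroup $N\leq G'$; if $N\leq H$ pass to $G/N$ via Lemma \ref{quiquo}; otherwise $H$ is (WLOG) an intersection of maximal subgroups, $H\cap N=1$, and Lemma \ref{normalgen} gives $\lambda(H,G)=-\lambda(HN,G)\gamma(N,H)$ and $\mu_A(H,G)=-\mu_A(HN,G)\gamma_A(N,H)$, where $\gamma$ and $\gamma_A$ count $G$- and $A$-classes of complements of $N$ containing $H$. The crux --- and the only place $G'\leq K$ is used --- is that two such complements that are $G$-conjugate are already conjugate by an element of $N_N(H)\leq N\leq G'\leq K$, so $\gamma(N,H)=\gamma_A(N,H)$. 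If you want to salvage your approach, you would need to prove your auxiliary vanishing statement, and the natural way to do so is essentially to rerun this same complement-counting induction; at that point the detour through the $G$-class regrouping buys you nothing.
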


To illustrate the meaning of the previous proposition, consider the following example. Let $G=A_4$ be the alternating group of degree 4 and $A$ the subgroup of $\inn(G)$ induced by conjugation with the 
elements of $G^\prime \cong C_2 \times C_2.$ The posets $\mathcal C(G)$ and $\mathcal C_A(G)$ are different. For example there are three subgroups of $G$ of order 2, which are conjugated in $G$, but not $A$-conjugated. However $\lambda(H,G)=\mu_A(H,G)$ for any $H\leq G.$

In Section \ref{genhall}, we generalize some result given by Hall in \cite{hall}, about the cardinality  $\phi(G,t)$ of the set 
 $\Phi(G,t)$ 
of $t$-tuples $(g_1,\dots,g_t)$ of group elements $g_i$ such that
$G=\langle g_1,\dots,g_t\rangle$.  As observed by P. Hall, using the M\"{o}bius inversion formula, it can be proved that
\begin{equation}\label{hallfo}
\phi(G,t)=\sum_{H \leq G}\mu(H,G)|H|^t.
\end{equation}
We  generalize this formula, showing that $\phi(G,t)$ can be computed with a formula involving $\mu_A$ for any possible choice of $A.$ 

\begin{thm}\label{compfi}For any finite group $G$ and any subgroup $A$ of $\aut(G),$
	$$\phi(G,t)=\sum_{[H]_A \in \mathcal C_A(G)}\mu_A(H,G)|\cup_{a\in A}(H^a)^t|.$$
\end{thm}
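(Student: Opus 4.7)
The plan is to run Hall's original Möbius-inversion argument at the level of the coarser poset $\mathcal{C}_A(G)$ rather than on the full subgroup lattice. Let $\sigma(H,t)$ denote the number of $t$-tuples in $H^t$ that generate $H$, so that $\phi(G,t)=\sigma(G,t)$. Since each $a\in A$ is an automorphism of $G$, the bijection $(g_1,\dots,g_t)\mapsto (g_1^a,\dots,g_t^a)$ identifies generating tuples of $H$ with generating tuples of $H^a$; in particular $\sigma(H^a,t)=\sigma(H,t)$, i.e.\ $\sigma(\cdot,t)$ is constant on every class $[H]_A$.

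The key combinatorial identity is obtained by counting $\bigcup_{a\in A}(H^a)^t$ according to the $A$-class of the subgroup generated by each tuple. A tuple $(g_1,\dots,g_t)$ belongs to this union if and only if $\langle g_1,\dots,g_t\rangle\leq H^a$ for some $a\in A$, equivalently $[\langle g_1,\dots,g_t\rangle]_A\leq [H]_A$. For a fixed class $[K]_A\leq[H]_A$, the tuples whose generated subgroup lies in $[K]_A$ are partitioned, according to which representative $K'\in[K]_A$ they generate, into $|[K]_A|$ disjoint pieces of size $\sigma(K',t)=\sigma(K,t)$. This yields
$$\left|\bigcup_{a\in A}(H^a)^t\right| \;=\; \sum_{[K]_A\leq[H]_A} |[K]_A|\,\sigma(K,t).$$

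Since this relation holds for every $[H]_A\in\mathcal{C}_A(G)$, Möbius inversion in the poset $\mathcal{C}_A(G)$ yields
$$|[H]_A|\,\sigma(H,t)\;=\;\sum_{[K]_A\leq[H]_A}\mu_A(K,H)\left|\bigcup_{a\in A}(K^a)^t\right|.$$
Specialising to $[H]_A=[G]_A=\{G\}$, one has $|[G]_A|=1$ and $\sigma(G,t)=\phi(G,t)$, so the left-hand side becomes $\phi(G,t)$ and the sum on the right ranges over all of $\mathcal{C}_A(G)$, producing exactly the formula claimed. The only real conceptual point is identifying the correct summand for the inversion: classes of subgroups weighted by their size, and values $\sigma(K,t)$ that factor through the quotient poset. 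The rest is a direct analogue of Hall's calculation, and no step looks like a serious obstacle once the bookkeeping above is set up.
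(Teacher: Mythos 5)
Your proof is correct and is essentially the paper's own argument: the paper sets $\psi_A(K,t)$ equal to the number of tuples generating some member of $[K]_A$ (which is exactly your $|[K]_A|\,\sigma(K,t)$), establishes the same identity $|\cup_{a\in A}(H^a)^t|=\sum_{[K]_A\leq[H]_A}\psi_A(K,t)$, and applies M\"{o}bius inversion in $\mathcal C_A(G)$. The only cosmetic difference is that you factor $\psi_A(K,t)$ through the class size and the classical $\sigma(K,t)$, which changes nothing of substance.
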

If $G$ is not cyclic, then $\phi(G,1)=0$, so we obtain the following equality, involving the values of $\mu_A.$
\begin{cor}If $G$ is not cyclic, then
$$0=\sum_{[H]_A \in  \mathcal C_A(G)}\mu_A(H,G)|\cup_{a\in A}H^a|.$$
\end{cor}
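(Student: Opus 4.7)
The plan is to obtain the corollary as an immediate specialization of Theorem \ref{compfi} to the case $t=1$. First I would observe that $\phi(G,1)$ is, by definition, the number of 1-tuples $(g)$ with $G=\langle g\rangle$; this equals the number of generators of $G$ as a cyclic group, and in particular vanishes whenever $G$ is not cyclic. Next I would note that for $t=1$ the set $(H^a)^t$ on the right-hand side of the formula is just $H^a$ itself, so $|\cup_{a\in A}(H^a)^t|$ collapses to $|\cup_{a\in A}H^a|$. Substituting $t=1$ into the identity of Theorem \ref{compfi} and using $\phi(G,1)=0$ yields the desired equation.

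There is essentially no obstacle to overcome: the full combinatorial content has already been packaged into Theorem \ref{compfi}, and the corollary is simply the observation that, for non-cyclic $G$, the left-hand side of that formula at $t=1$ vanishes. The only point worth highlighting explicitly in the write-up is the trivial reduction $(H^a)^1=H^a$, to confirm that the inner set over which the cardinality is taken does coincide with $\cup_{a\in A} H^a$.
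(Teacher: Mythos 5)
Your proposal is correct and coincides with the paper's own argument: the corollary is obtained by setting $t=1$ in Theorem \ref{compfi}, noting that $\phi(G,1)=0$ for non-cyclic $G$ and that $(H^a)^1=H^a$. Nothing further is needed.
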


Further generalizations are given in Section \ref{numbersbgs}, where we consider the function $\phi^*(G,t)$, which is an analogue of $\phi (G,t)$: actually, $\phi^*(G,t)$ denotes the cardinality of the set of of $t$-tuples $(H_1,\dots,H_t)$ of subgroups of $G$ such that
$G=\langle H_1,\dots,H_t\rangle$.
As a corollary of our formula for computing $\phi^*(G,t)$, we obtain we following unexpected result.
\begin{prop}Let $\sigma(X)$ denote the number of subgroups of a finite group $X.$ 
For any finite group $G,$ the following equality holds:
$$1=\sum_{H \leq G}\mu(H,G)\sigma(H).$$	
	\end{prop}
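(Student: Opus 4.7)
The plan is to derive the identity as the $t=1$ specialization of the Möbius-inversion formula for $\phi^*(G,t)$ that is promised in Section~\ref{numbersbgs}. Concretely, for a subgroup $K\leq G$, let $f(K)$ denote the number of $t$-tuples $(H_1,\dots,H_t)$ of subgroups of $K$, so $f(K)=\sigma(K)^t$, and let $g(K)$ denote the number of such $t$-tuples with $\langle H_1,\dots,H_t\rangle = K$. Since every $t$-tuple of subgroups of $K$ generates a unique subgroup $H\leq K$, we have the ``summatory'' relation
$$f(K)=\sum_{H\leq K}g(H).$$
Möbius inversion on the subgroup lattice of $G$ (the classical argument Hall used to derive \eqref{hallfo} from $|K|^t=\sum_{H\leq K}(\text{\#\,generating $t$-tuples of $H$})$) then yields
$$\phi^*(G,t)\;=\;g(G)\;=\;\sum_{H\leq G}\mu(H,G)\sigma(H)^t.$$

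Now I would specialize to $t=1$. A single subgroup $H_1\leq G$ satisfies $\langle H_1\rangle=G$ exactly when $H_1=G$, so $\phi^*(G,1)=1$. Substituting $t=1$ into the displayed formula gives
$$1\;=\;\sum_{H\leq G}\mu(H,G)\sigma(H),$$
which is the desired identity.

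There is essentially no serious obstacle: the only ingredient is the combinatorial observation that ``$t$-tuples of subgroups of $K$'' is counted by $\sigma(K)^t$, which is immediate, together with the standard Möbius inversion on the subgroup lattice. The substantive content sits in the general formula for $\phi^*(G,t)$ established in Section~\ref{numbersbgs}; once that is in hand, the proposition is simply its evaluation at $t=1$, the mild surprise being that $\phi^*(G,1)=1$ collapses a sum with many nonzero terms down to $1$.
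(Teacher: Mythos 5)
Your argument is correct and is essentially the paper's own: the paper proves $\phi^*(G,t)=\sum_{[H]}\mu_A(H,G)\sigma_A(H,t)$ by exactly this Möbius inversion (for general $A$, which reduces to your $f(K)=\sum_{H\le K}g(H)$ when $A=1$), then specializes to $A=1$ and $t=1$ using $\phi^*(G,1)=1$. No gap; nothing to add.
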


Finally, in  Section  \ref{lamu},  we consider one question originated from a result given by Hawkes, Isaacs and \"{O}zaydin in  \cite{isac}:  they proved
 that the equality
$$\mu(1,G) = |G^\prime| · \lambda(1,G)$$
holds for any finite solvable group $G$; later Pahlings \cite{pah} generalized the result  proving that
\begin{equation}\label{lm} \mu(H,G) = |N_{G^\prime}(H) : G^\prime \cap H| \cdot \lambda(H,G)
\end{equation}
holds for any $H \leq G$ whenever $G$ is finite and solvable.
Following \cite{dvz}, we say that $G$ satisfies the $(\mu,\lambda)$-property if
 (\ref{lm}) holds for any $H\leq G.$ Several classes of non-solvable groups satisfy the $(\mu, \lambda)$-property, for example all the minimal non-solvable groups (see \cite{dvz}). However
it is known that the $(\mu, \lambda)$-property does not hold for every finite group. For instance, it does not hold for the following 
finite almost simple groups: $A_9$, $S_9,$ $A_{10},$ $S_{10},$ $A_{11},$ $S_{11},$ $A_{12},$ $S_{12}$, $A_{13}$, $S_{13},$ $J_2$,  $PSU(3,3)$, $PSU(4,3)$, $PSU(5,2),$
$M_{12},$  $M_{23},$ $M_{24},$ $PSL(3,11),$ $HS$,  $\aut(HS),$ $He$ $\aut(H)$, $McL,$ 
$PSL(5,2),$ $G_2(4)$, $Co_3$, $P\Omega^-(8,2),$ $P\Omega^+(8,2).$ It is somehow intriguing to notice that although the $(\mu, \lambda)$-property fails for the sporadic groups  $M_{12},$ $J_2,$ $McL,$ it holds for their automorphism groups. 

We prove the following  generalization of Pahlings's result.

\begin{thm}\label{lamuso}Let  $N$ be a solvable normal subgroup of a finite group $G$. If $G/N$ satisfies the $(\mu,\lambda)$-property,
	then $G$ also satisfies the $(\mu,\lambda)$-property.
\end{thm}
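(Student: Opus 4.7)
The plan is to argue by induction on $|N|$, reducing to the case where $N$ is a minimal normal subgroup of $G$, and then to verify the identity \eqref{lm} separately for subgroups $H$ with $N \leq H$ and for subgroups $H$ with $N \not\leq H$.

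\emph{Reduction.} Suppose $N$ admits a proper nontrivial $G$-invariant subgroup $M$. Then $N/M$ is a solvable normal subgroup of $G/M$ with $(G/M)/(N/M) \simeq G/N$ satisfying the $(\mu,\lambda)$-property, so by induction on $|N|$ the group $G/M$ satisfies the $(\mu,\lambda)$-property. Since $M$ is itself solvable and normal in $G$ with $|M|<|N|$, a second application of the inductive hypothesis (with $N$ replaced by $M$ and $G/N$ replaced by $G/M$) yields the conclusion for $G$. We may therefore assume that $N$ is a minimal normal subgroup of $G$, and hence an elementary abelian $p$-group for some prime $p$.

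\emph{The case $N \leq H$.} The map $K \mapsto K/N$ is a lattice isomorphism $[H,G] \to [H/N,G/N]$ that also induces a bijection between $G$-conjugacy classes in $[H,G]$ and $(G/N)$-conjugacy classes in $[H/N,G/N]$. Hence $\mu(H,G)=\mu(H/N,G/N)$ and $\lambda(H,G)=\lambda(H/N,G/N)$. Applying the hypothesis to $H/N$ in $G/N$ and using that $(G/N)'=G'N/N$, $N \leq H \leq N_G(H)$, and $N \cap G' \leq G' \cap H$, Dedekind's identity gives
\[
\bigl| N_{(G/N)'}(H/N) : (G/N)' \cap (H/N) \bigr| \;=\; \bigl| N_{G'}(H)\cdot N : (G' \cap H)\cdot N \bigr| \;=\; \bigl| N_{G'}(H) : G' \cap H \bigr|,
\]
so the $(\mu,\lambda)$-identity transfers from $H/N$ to $H$.

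\emph{The case $N \not\leq H$.} I expect this to be the main obstacle. The natural plan is a secondary induction on $|G|$, combined with an analysis of the interval $[H,G]$ split according to whether $K \geq N$ or not. For $K \geq N$ the previous step together with the correspondence with $G/N$ applies. For the remaining subgroups $K$, one pairs $K$ with $KN$ and exploits the elementary abelian structure of $N$ to produce cancellations in the alternating sums defining $\mu(H,G)$ and $\lambda(H,G)$, exactly in the spirit of Pahlings' treatment of the solvable case \cite{pah}: the subgroups $K$ with $H \leq K$, $KN = HN$, and $K \cap N = H \cap N$ are the complements of $N/(H\cap N)$ in $HN/(H \cap N)$ above $H$, and a parallel partition can be performed on the class poset. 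The technical heart of the argument is to check that after these cancellations the residual proportionality factor between $\mu(H,G)$ and $\lambda(H,G)$ collapses to $|N_{G'}(H):G'\cap H|$; this requires a normalizer identity comparing $N_{G'}(H)$ with the normalizers that appear in the $G/N$-step together with the $N$-contribution coming from the complementation, and it is the careful matching of these normalizer indices that I anticipate to be the most delicate point of the proof.
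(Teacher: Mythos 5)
Your reduction to a minimal normal (hence elementary abelian) $N$ and your treatment of the case $N\leq H$ are both correct and agree with the paper's own argument: the identity $|N_{G^\prime}(H)N:(G^\prime\cap H)N|=|N_{G^\prime}(H):G^\prime\cap H|$ holds because $N\leq H\leq N_G(H)$ forces $N\cap N_{G^\prime}(H)=N\cap G^\prime=N\cap H\cap G^\prime$, which is exactly the computation carried out in the paper.

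The case $N\not\leq H$, however, is not proved: you describe a plan and explicitly defer what you yourself call the technical heart of the argument, and that deferred step is precisely the content of the theorem. The paper closes this gap as follows. By Proposition \ref{intmax} one may assume $H=M_1\cap\dots\cap M_t$ with each $M_i$ maximal; some $M_i$ does not contain $N$, so $M_i$ is a complement of $N$ containing $H$ and $H\cap N=1$. Lemma \ref{normalgen} (a consequence of Crapo's lemma on conjunctive elements, applied once with $A=1$ and once with $A=\inn(G)$) gives $\mu(H,G)=-\mu(HN,G)\,\kappa$ and $\lambda(H,G)=-\lambda(HN,G)\,\delta$, where $\kappa$ is the number of complements of $N$ in $G$ containing $H$ and $\delta$ the number of their conjugacy classes, and the already-settled case yields $\mu(HN,G)/\lambda(HN,G)=|N_{G^\prime N}(HN):HN\cap G^\prime N|$. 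What remains is the evaluation of $\kappa/\delta$, which rests on the identity $N_G(HN)=N_G(H)N$ and a dichotomy: if $N\leq Z(G)$ then $\kappa=\delta$ and $N\cap G^\prime=1$; if $N\not\leq Z(G)$ then $N\leq G^\prime$ and $\kappa/\delta=|N_N(H)|$, and in either case the product collapses to $|N_{G^\prime}(H):G^\prime\cap H|$. None of these steps appears in your proposal beyond the expectation that a parallel partition can be performed; in particular the count of conjugacy classes of complements (the source of the factor $|N_N(H)|$) and the central versus non-central case split are the decisive points, and without them the proof is incomplete.
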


An almost immediate consequence of the  previous theorem is the following.

\begin{cor}\label{psu}$PSU(3,3)$ is the smallest group which does not satisfy the $(\mu,\lambda)$ property.
\end{cor}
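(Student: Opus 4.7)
The plan is to combine Theorem \ref{lamuso} with Pahlings's theorem (which states that every finite solvable group satisfies the $(\mu,\lambda)$-property) in order to cut down the possible structure of a minimal counterexample, and then handle what remains by a finite case analysis.

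First, suppose by contradiction that $G$ is a group of order strictly less than $|PSU(3,3)|=6048$ which fails the $(\mu,\lambda)$-property, and assume $G$ has minimal order with this property. Let $N$ be the solvable radical of $G$. If $N\neq 1$, then $G/N$ has order strictly smaller than $|G|$, hence by minimality $G/N$ satisfies the $(\mu,\lambda)$-property; but then Theorem \ref{lamuso} forces $G$ itself to satisfy the $(\mu,\lambda)$-property, a contradiction. Hence $N=1$, i.e.\ $G$ has trivial solvable radical. In particular $G$ is not solvable (otherwise Pahlings's theorem would immediately give the property), and the socle of $G$ is a direct product $S_1\times\cdots\times S_k$ of non-abelian simple groups, with $G$ embedding into $\aut(S_1\times\cdots\times S_k)$.

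Next I would enumerate the groups of order less than $6048$ with trivial solvable radical. The non-abelian simple groups of order less than $6048$ are
\[
A_5,\ PSL(2,7),\ A_6,\ PSL(2,8),\ PSL(2,11),\ PSL(2,13),\ PSL(2,17),\ A_7,\ PSL(2,19),\ PSL(2,16),\ PSL(3,3),
\]
together with the single product case $A_5\times A_5$ (of order $3600$), since any product of two of these simple groups other than $A_5\times A_5$ already exceeds $6048$. For each non-abelian simple group $S$ in the list I would run through the almost simple groups $S\leq G\leq \aut(S)$ of order below $6048$, and for $A_5\times A_5$ through the subgroups of $\aut(A_5\times A_5)=S_5\wr C_2$ of order below $6048$ that contain the socle.

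The remaining, and main, obstacle is the verification that each of these finitely many candidates does in fact satisfy the $(\mu,\lambda)$-property for every subgroup $H$. This is a routine but unavoidable computation of $\mu(H,G)$ and $\lambda(H,G)$ on the subgroup lattice (e.g.\ with \textsf{GAP}); one checks the identity \eqref{lm} across all conjugacy classes of subgroups. Once this computational step is completed, no counterexample of order less than $6048$ exists, and together with the fact (already recorded in the introduction) that $PSU(3,3)$ itself fails the $(\mu,\lambda)$-property, the corollary follows.
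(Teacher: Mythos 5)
Your proposal is correct and follows essentially the same route as the paper: use Theorem \ref{lamuso} together with minimality to reduce to groups with trivial solvable radical (equivalently, no abelian minimal normal subgroup), observe that the order bound forces $G$ to be almost simple or equal to $A_5\times A_5$, and finish by a finite machine check (the paper notes this can be done from the tables of marks in GAP, via \cite[Proposition 1]{pah}). The only cosmetic difference is that you factor out the whole solvable radical at once rather than a single abelian minimal normal subgroup, which Theorem \ref{lamuso} equally permits.
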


In the last part of Section \ref{lamu}, 
we use Theorem \ref{compfi} to deduce some consequences of the $(\mu,\lambda)$-property. In particular we prove the following theorem.

\begin{thm}Suppose that a finite group $G$ satisfies the  $(\mu,\lambda)$-property.  Then, for every positive integer $t,$ the following equality is satisfied:
$$		\sum_{[H] \in \mathcal C(G)}\lambda(H,G)\left(\frac{|H|^{t-1}|G||G^\prime  H|}{|G^\prime N_G(H)|}-|\cup_{a\in A}(H^a)^t|\right)=0.
$$
\end{thm}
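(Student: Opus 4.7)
The plan is to evaluate $\phi(G,t)$ in two different ways and equate the results. The first evaluation will use Hall's classical formula \eqref{hallfo} together with the assumption that $G$ satisfies the $(\mu,\lambda)$-property; the second will apply Theorem \ref{compfi} to the subgroup $A=\inn(G)$ of $\aut(G)$.

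For the first expression, I would start from $\phi(G,t)=\sum_{H\leq G}\mu(H,G)|H|^t$ and observe that both $\mu(H,G)$ and $|H|$ are constant on $G$-conjugacy classes. Grouping by class, and using $|[H]|=|G:N_G(H)|$, I get
$$\phi(G,t)=\sum_{[H]\in\mathcal C(G)}\frac{|G|}{|N_G(H)|}\,\mu(H,G)\,|H|^t.$$
Then I would invoke the $(\mu,\lambda)$-property (\ref{lm}) to replace $\mu(H,G)$ by $|N_{G'}(H):G'\cap H|\cdot\lambda(H,G)$; this makes the coefficient of $\lambda(H,G)$ equal to
$$\frac{|N_{G'}(H)|}{|G'\cap H|}\cdot\frac{|G|}{|N_G(H)|}\cdot|H|^t.$$

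The one genuinely computational step is the simplification of this coefficient. Writing $N_{G'}(H)=G'\cap N_G(H)$ and applying the product formula $|XY|\cdot|X\cap Y|=|X|\cdot|Y|$ twice --- first with $X=G'$, $Y=N_G(H)$ to get $|G'\cap N_G(H)|/|N_G(H)|=|G'|/|G'N_G(H)|$, then with $X=G'$, $Y=H$ to get $|G'|/|G'\cap H|=|G'H|/|H|$ --- one arrives at
$$\frac{|N_{G'}(H)|}{|G'\cap H|}\cdot\frac{|G|}{|N_G(H)|}\cdot|H|^t=\frac{|H|^{t-1}|G||G'H|}{|G'N_G(H)|},$$
which is exactly the first term inside the parentheses of the claimed identity.

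For the second expression, Theorem \ref{compfi} applied with $A=\inn(G)$, together with the identity $\mu_{\inn(G)}=\lambda$ recorded in the introduction, gives $\phi(G,t)=\sum_{[H]\in\mathcal C(G)}\lambda(H,G)\,|\cup_{a\in A}(H^a)^t|$. Subtracting this from the preceding expression for $\phi(G,t)$ yields the asserted identity. I do not foresee any substantive obstacle: the proof is essentially a bookkeeping combination of Hall's formula, the $(\mu,\lambda)$-property, Theorem \ref{compfi}, and the routine index manipulations above.
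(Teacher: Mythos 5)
Your proposal is correct and follows essentially the same route as the paper's own proof: both evaluate $\phi(G,t)$ once via Theorem \ref{compfi} with $A=\inn(G)$ and once via Hall's formula grouped into conjugacy classes with the $(\mu,\lambda)$-property substituted in, and both reduce the coefficient to $|H|^{t-1}|G||G'H|/|G'N_G(H)|$ by the same index manipulations. No issues.
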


Some open questions are proposed along the paper.

\section{Applying some general properties of the M\"{o}bius function}\label{crap}

 Given a poset $P$,  a closure on $P$ is a function $\bar{}:P\rightarrow P$ satisfying the following three conditions:
\begin{itemize}
	\item[a)] $x\leq\bar{x}$ for all $x\in P$;
	\item[b)] if $x,y\in P$ with $x\leq y$, then $\bar{x}\leq\bar{y}$;
	\item[c)] $\bar{\bar{x}}=\bar{x}$ for all $x\in P$.
\end{itemize}
If $\,\bar{}\,$ is a closure map on $P,$ then $\overline{P}=\left\{ x\in P|\,\bar{x}=x\right\}$
is a poset with order induced by the order on $P$. We have:
\begin{thm}[The closure theorem of Crapo \cite{crapo2}]\label{crap1} Let $P$ be a finite
	poset and let \ $\bar{}: P \to P$  be a closure map. Fix $x,y \in P$
	such that $y \in \overline P.$ Then
	$$\sum_{\bar z=y}\mu_P(x,z)=\begin{cases}\mu_{\bar P}(x,y) {\text{ if }}x = \bar x\\0
	{\text { otherwise.}}
	\end{cases}$$
\end{thm}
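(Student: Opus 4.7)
The plan is to reduce the statement to the defining recursion of $\mu_P$ applied to a carefully chosen auxiliary partial sum. First I would set
$$T(x,w) := \sum_{\substack{z \in P \\ \bar z = w,\, x \leq z}} \mu_P(x,z)$$
for each $w \in \overline P$; the quantity to be computed is then exactly $T(x,y)$, since any $z$ with $\mu_P(x,z) \neq 0$ satisfies $x \leq z$.

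Next, I would partition the standard Möbius sum over the interval $[x,y]$ according to the value of $\bar z$. Axiom (a) gives $z \leq \bar z$; axiom (b) combined with $z \leq y = \bar y$ gives $\bar z \leq \bar y = y$; and $x \leq z \leq \bar z$ gives $x \leq \bar z$. Grouping by $w = \bar z$ therefore yields the key identity
$$\sum_{x \leq z \leq y} \mu_P(x,z) \;=\; \sum_{\substack{w \in \overline P \\ x \leq w \leq y}} T(x,w),$$
whose left-hand side equals $1$ if $x = y$ and $0$ if $x < y$ by the defining property of $\mu_P$.

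I would then split into two cases and induct on $y \in \overline P$ (with $x \leq y$). If $x = \bar x$, the base case $y = x$ gives $T(x,x) = \mu_P(x,x) = 1 = \mu_{\overline P}(x,x)$, while for $x < y$ the identity above rearranges to
$$T(x,y) = -\sum_{\substack{w \in \overline P \\ x \leq w < y}} T(x,w) = -\sum_{\substack{w \in \overline P \\ x \leq w < y}} \mu_{\overline P}(x,w) = \mu_{\overline P}(x,y),$$
which is exactly the recursion for $\mu_{\overline P}$. If instead $x \neq \bar x$, the smallest $y \in \overline P$ with $x \leq y$ is $\bar x$; for this $y$ the outer sum collapses to the single term $T(x,\bar x)$, forcing $T(x,\bar x) = 0$, and the inductive step is identical, yielding $T(x,y) = 0$ for every admissible $y$.

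The main obstacle, rather than a genuine difficulty, will be the bookkeeping in the partition step: one must invoke axioms (a)--(c) precisely to confirm that the constraints $x \leq z \leq y$ are equivalent to $\bar z = w$ together with $x \leq w \leq y$, and in the non-closed case to verify that $\bar x$ really is the unique minimum of $\overline P \cap [x,y]$ so that the induction has a valid base. Once this is in hand, both cases collapse into the recursion defining $\mu_{\overline P}$.
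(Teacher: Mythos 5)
Your argument is correct: the partition of the interval $[x,y]$ into fibers of the closure map is justified exactly by axioms (a)--(c) as you indicate, and the two inductions (on $y\in\overline{P}$, with base $y=x$ when $x=\bar x$ and base $y=\bar x$ when $x<\bar x$) reduce everything to the defining recursions of $\mu_P$ and $\mu_{\overline{P}}$. The paper does not prove this statement at all --- it is quoted from Crapo's article --- so there is nothing to compare against; your proof is the standard self-contained derivation and would serve as a valid substitute for the citation.
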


In \cite{hall}, P. Hall proved that if $H < G$, then $\mu(H,G)\neq 0$ only if $H$ is an intersection of maximal subgroups of $G$.
Using the previous theorem, the following more general statement can be obtained.
\begin{prop}\label{intmax}
	If $H<G$ and $\mu_A(H,G)\neq 0,$ then $H$ can be obtained as intersection of maximal subgroups of $G.$
\end{prop}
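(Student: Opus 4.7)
The natural approach is to apply Crapo's closure theorem (Theorem~\ref{crap1}) with the closure map induced by ``intersection of maximal overgroups''. Concretely, for $H\le G$ write $\Phi(H)$ for the intersection of all maximal subgroups of $G$ containing $H$, with the convention $\Phi(G)=G$. Since every $a\in A\le\aut(G)$ permutes the set of maximal subgroups of $G$, the identity $\Phi(H^a)=\Phi(H)^a$ holds, so $[H]_A\mapsto[\Phi(H)]_A$ gives a well-defined map $\bar{}:\mathcal C_A(G)\to\mathcal C_A(G)$.

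I would then verify the three closure axioms. Axiom (a), $[H]_A\le[\Phi(H)]_A$, is immediate from $H\le\Phi(H)$. For (b), assume $[H]_A\le[K]_A$, i.e.\ $H^a\le K$ for some $a\in A$; every maximal subgroup of $G$ containing $K$ also contains $H^a$ and hence contains $\Phi(H^a)=\Phi(H)^a$, so $\Phi(H)^a\le\Phi(K)$, giving $[\Phi(H)]_A\le[\Phi(K)]_A$. For (c), observe that a maximal subgroup of $G$ contains $H$ if and only if it contains $\Phi(H)$ (one direction is the definition of $\Phi(H)$; the other follows from $H\le\Phi(H)$), and therefore $\Phi(\Phi(H))=\Phi(H)$.

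Next, apply Crapo's theorem with $x=[H]_A$ and $y=[G]_A$. The class $[G]_A$ is a fixed point of the closure, and the only $[z]_A$ with $\overline{[z]_A}=[G]_A$ is $[G]_A$ itself: if $z<G$, then $z$ lies in some maximal subgroup, so $\Phi(z)<G$ and $[\Phi(z)]_A\ne[G]_A$. Hence the sum on the left of Crapo's formula collapses to the single term $\mu_A(H,G)$, and the theorem asserts that this quantity vanishes unless $[H]_A=\overline{[H]_A}$.

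The contrapositive says that $\mu_A(H,G)\ne 0$ forces $[\Phi(H)]_A=[H]_A$, i.e.\ $\Phi(H)=H^a$ for some $a\in A$; since $H\le\Phi(H)$ and $|H^a|=|H|$, this yields the genuine equality $H=\Phi(H)$, so $H$ is an intersection of maximal subgroups of $G$. The only point requiring a little care beyond transcribing Hall's original argument is the well-definedness of the closure on $A$-classes and the passage from an $A$-conjugacy $\Phi(H)=H^a$ back to a set-theoretic equality; both are handled by the order argument just used.
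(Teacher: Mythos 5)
Your argument is correct and is exactly the paper's proof: the paper defines the same closure $[H]_A\mapsto[\overline H]_A$ on $\mathcal C_A(G)$ and invokes Theorem \ref{crap1}, leaving all the verifications implicit. You have merely supplied the details (well-definedness on $A$-classes, the three closure axioms, and the passage from $[\Phi(H)]_A=[H]_A$ to $\Phi(H)=H$ via the order/cardinality argument), all of which check out.
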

\begin{proof}
Let $H$ be a proper subgroup of $G$ and let $\overline H$ be the intersection of the maximal subgroups of $G$ containing $H$. The map $[H]_A \mapsto[\overline H]_A$ is a well defined closure map on $\mathcal C_A(G),$ so the conclusion follows immediately from Theorem \ref{crap1}.
\end{proof}

An element $a$ of a poset $\mathcal P$ is called conjunctive
if the pair $\{a, x\}$ has a least upper bound, written $a\vee x,$ for each $x \in \mathcal  P.$

\begin{lemma}\cite[Lemma 2.7]{isac}\label{congi}
	Let $\mathcal P$ be a poset with a least element $0,$ and let $a > 0$
	be a conjunctive element of $\mathcal P.$ Then, for each $b> a,$ we have
	$$\sum_{a\vee x=b}\mu_{\mathcal P}(x)=0.$$
\end{lemma}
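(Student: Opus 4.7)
The plan is to realize the left-hand sum as an instance of Crapo's closure theorem (Theorem~\ref{crap1}) applied to a carefully chosen closure map. Define the map $\bar{}\colon \mathcal{P}\to\mathcal{P}$ by $\bar x := a\vee x$, which is well-defined on all of $\mathcal{P}$ precisely because $a$ is a conjunctive element.

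First I would verify the three closure axioms. (a) $x\leq a\vee x$ is immediate from the definition of a join; (b) if $x\leq y$, then $a\vee y$ is an upper bound of $\{a,x\}$, so $a\vee x\leq a\vee y$; (c) $\bar{\bar x}=a\vee(a\vee x)=a\vee x$, since $a\vee x\geq a$. The set of fixed points is $\overline{\mathcal{P}}=\{x\in\mathcal{P}\mid a\leq x\}$.

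Next I would apply Crapo's theorem with the initial element $0$ (the least element of $\mathcal{P}$) and the target $y=b$. Since $b\geq a$, we have $\bar b=a\vee b=b$, so $b$ indeed lies in $\overline{\mathcal{P}}$ and is a legitimate choice for $y$. On the other hand, $\bar 0=a\vee 0=a\neq 0$ because $a>0$, so $0$ is \emph{not} a fixed point. Theorem~\ref{crap1} therefore falls into its \textquotedblleft otherwise\textquotedblright{} branch and gives
$$\sum_{\bar z=b}\mu_{\mathcal{P}}(0,z)=0.$$
Translating $\bar z=b$ back into $a\vee z=b$ and using the convention $\mu_{\mathcal{P}}(z)=\mu_{\mathcal{P}}(0,z)$, this is exactly the claimed identity.

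There is no substantial obstacle: the entire content of the lemma is packaged inside Crapo's theorem once the right closure is chosen. The only subtlety worth highlighting is that the hypothesis $a>0$ is doing essential work, namely ensuring that $\bar 0=a\neq 0$ so that the closure theorem produces a vanishing sum rather than a (generally nonzero) Möbius value in the sub-poset $\overline{\mathcal{P}}$ of elements above $a$.
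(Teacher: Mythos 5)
Your proof is correct: $\bar x := a\vee x$ is indeed a closure map (all three axioms check out exactly as you argue, with $\overline{\mathcal P}=\{x\mid a\leq x\}$), $b$ is a fixed point because $b>a$, and $0$ is not a fixed point because $a>0$, so Crapo's theorem lands in its vanishing branch and yields precisely $\sum_{a\vee z=b}\mu_{\mathcal P}(0,z)=0$. Note, however, that the paper does not prove this lemma at all --- it is quoted verbatim from Hawkes--Isaacs--\"Ozaydin \cite[Lemma 2.7]{isac} --- so there is no in-paper argument to match. The proof in the cited source is more elementary: since $a\leq b$, one has $\{x\mid a\vee x\leq b\}=\{x\mid x\leq b\}$, so summing $f(c):=\sum_{a\vee x=c}\mu_{\mathcal P}(x)$ over $a\leq c\leq b$ gives $\sum_{x\leq b}\mu_{\mathcal P}(x)=0$, and induction on $c$ starting from $f(a)=\sum_{x\leq a}\mu_{\mathcal P}(x)=0$ forces $f(b)=0$. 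Your route instead leverages Theorem \ref{crap1}, which the paper has already stated and uses for Proposition \ref{intmax}, so it is arguably the more natural derivation in this context; what it costs is the (here harmless, since all posets in the paper are finite) finiteness hypothesis built into the stated form of Crapo's theorem, and what it buys is that both auxiliary results of Section \ref{crap} become corollaries of a single principle.
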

From the above Lemma  \ref{congi}, it follows easily the following Lemma \ref{cocra}  which, together with Lemma \ref{normalgen} and Lemma \ref{quiquo} allow us to prove Proposition \ref{uguali} 
\begin{lemma}\label{cocra}Let $N$ be an $A$-invariant normal subgroup of $G$ and $H\leq G.$
	If $H < HN < G,$
	then $$\mu_A(H,G)=-\sum_{[Y]_A \in \mathcal S_A(H,N)}\mu_A(H,Y),$$ with
	$\mathcal S_A(H,N)=\{[Y]_A \in \mathcal C_A(G) \mid [H]_A\leq [Y]_A  < [G]_A\text { and } YN=G\}$.
	\end{lemma}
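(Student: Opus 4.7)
The plan is to apply Lemma \ref{congi} to the interval $\mathcal P=\{[Z]_A\mid [H]_A\leq[Z]_A\leq[G]_A\}$ in $\mathcal C_A(G)$, viewed as a poset with least element $0=[H]_A$. Taking $a=[HN]_A$ and $b=[G]_A$, the hypothesis $H<HN<G$ gives $0<a<b$, so the combinatorial set-up of Lemma \ref{congi} is in force as soon as conjunctivity of $a$ is verified.

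The main step will be to show that $[HN]_A$ is a conjunctive element of $\mathcal P$ and to identify the join explicitly: for any $[X]_A\in\mathcal P$, one has $[HN]_A\vee[X]_A=[XN]_A$. Given such an $[X]_A$, after replacing $X$ by an $A$-conjugate one may assume $H\leq X$, and then $[XN]_A$ is plainly an upper bound of both $[HN]_A$ and $[X]_A$. For the least-upper-bound property, given any upper bound $[Z]_A$, I would first replace $Z$ by an $A$-conjugate so that $X\leq Z$; the remaining condition $(HN)^c\leq Z$ for some $c\in A$, combined with the $A$-invariance of $N$ (which gives $N=N^c\leq (HN)^c\leq Z$), then forces $N\leq Z$, whence $XN\leq Z$ and $[XN]_A\leq [Z]_A$. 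This is the delicate point of the argument: the $A$-invariance of $N$ is what allows one to transfer the normal part past the $A$-conjugation used to place $X$ inside $Z$; without it the candidate join $[XN]_A$ would not in general dominate every upper bound.

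With conjunctivity established, Lemma \ref{congi} yields
$$\sum_{[HN]_A\vee[X]_A=[G]_A}\mu_A(H,X)=0.$$
By the formula for the join, the indexing condition is equivalent to $XN=G$, so the sum ranges over the $A$-classes $[X]_A\in\mathcal P$ with $XN=G$. Separating the top term $[X]_A=[G]_A$ from the $A$-classes with $X<G$ and $XN=G$, which are exactly the elements of $\mathcal S_A(H,N)$, the identity rearranges to
$$\mu_A(H,G)+\sum_{[Y]_A\in\mathcal S_A(H,N)}\mu_A(H,Y)=0,$$
which is the stated formula.
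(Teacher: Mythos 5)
Your proposal is correct and follows the paper's own route exactly: both apply Lemma \ref{congi} to the interval above $[H]_A$ with $a=[HN]_A$ as the conjunctive element and $[HN]_A\vee[X]_A=[XN]_A$. The only difference is that you spell out the verification of conjunctivity (correctly identifying the role of the $A$-invariance of $N$), which the paper merely asserts.
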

\begin{proof}
	Let $\mathcal P$ be the interval $\{[K]_A \in \mathcal C_G(A) \mid [H]_A \leq [K]_A \leq [G]_A]\}.$
	Notice that $[HN]_A$ is a conjunctive element of $\mathcal P.$ Indeed $[HN]_A \vee [K]_A = [KN]_A$ for every $[K]_A \in \mathcal P.$ So the conclusion follows immediately from Lemma \ref{congi}
	\end{proof}

 \begin{lemma}\label{normalgen}
	Let $K$ and $A$ be a subgroup of $G$ and the subgroup of $\inn(G)$ induced by the conjugation with the elements of $K$, respectively. Assume that $N$ is an abelian minimal normal subgroup of $G$ contained in $K$ and
$H < HN \leq  G.$ 
Then $$\mu_A(H,G)=-\mu_A(HN,G)
\gamma_A(N,H),$$ where $\gamma_A(N,H)$ is the number of $A$-conjugacy classes  of complements of $N$ in $G$ containing $H.$
\end{lemma}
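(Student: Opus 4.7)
The plan is to apply Lemma~\ref{cocra} and identify each summand via the structure of complements of $N$ in $G$.

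\emph{Boundary case $HN=G$.} Since $H\cap N$ is normalized by $H$ and by the abelian $N$, it is normal in $HN=G$; minimality of $N$ together with $H<G$ forces $H\cap N=1$, so $H$ is a complement of $N$. Any proper overgroup $L$ of $H$ in $G$ satisfies $L\cap N\neq 1$ by an order count, whence $N\le L$ by minimality and $L=HN=G$. Thus $H$ is maximal in $G$, the class $[H]_A$ is maximal below $[G]_A$ in $\mathcal C_A(G)$, and $\mu_A(H,G)=-1$. The only complement of $N$ containing $H$ is $H$ itself (by order), so $\gamma_A(N,H)=1$ and both sides of the claimed identity equal $-1$.

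\emph{Main case $HN<G$.} Lemma~\ref{cocra} gives
$$\mu_A(H,G)=-\sum_{[Y]_A\in\mathcal S_A(H,N)}\mu_A(H,Y).$$
For each such $Y$, the subgroup $Y\cap N$ is normalized by $Y$ and by the abelian $N$, hence is normal in $YN=G$; since $Y<G$ rules out $N\le Y$, minimality forces $Y\cap N=1$, so $Y$ is a complement of $N$. Because $[H]_A\le[Y]_A$, we may choose a representative $Y$ containing $H$; this identifies $\mathcal S_A(H,N)$ with the set of $A$-classes of complements of $N$ in $G$ containing $H$, giving $|\mathcal S_A(H,N)|=\gamma_A(N,H)$.

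\emph{Interval isomorphism.} It remains to show $\mu_A(H,Y)=\mu_A(HN,G)$ for each such $Y$. Fixing $Y\supseteq H$, Dedekind's modular law together with $Y\cap N=1$ yields mutually inverse lattice isomorphisms $L\mapsto LN$ and $M\mapsto M\cap Y$ between the subgroup intervals $[H,Y]$ and $[HN,G]$. Since $N$ is $A$-invariant, the assignment $[L]_A\mapsto[LN]_A$ descends to a well-defined, order-preserving map $\Phi\colon[[H]_A,[Y]_A]\to[[HN]_A,[G]_A]$ in $\mathcal C_A(G)$; surjectivity is immediate by taking $L=M\cap Y$ for a representative $M\supseteq HN$, since then $LN=M$ by Dedekind. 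Once $\Phi$ is shown to be a poset isomorphism, $\mu_A(H,Y)=\mu_A(HN,G)$, and inserting this into the display completes the proof.

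The principal obstacle is injectivity of $\Phi$: if $L_1,L_2\in[H,Y]$ satisfy $L_1N=L_2^aN$ for some $a\in A$, one must show $L_1$ and $L_2$ are $A$-conjugate. The key point is that $L_1=L_1N\cap Y$ and $L_2^a=L_1N\cap Y^a$ both arise as intersections of the common subgroup $L_1N$ with the $A$-conjugate complements $Y$ and $Y^a$ of $N$ in $G$; the hypothesis $N\le K$ places $N$-conjugation inside $A$, and one exploits this cocycle-type freedom to transport $L_2^a$ back to $L_1$ by an element of $A$.
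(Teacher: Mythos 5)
Your overall strategy is exactly the paper's: apply Lemma~\ref{cocra}, identify $\mathcal S_A(H,N)$ with the $A$-classes of complements of $N$ containing $H$ (your treatment of the boundary case $HN=G$ and of the identification $|\mathcal S_A(H,N)|=\gamma_A(N,H)$ is correct, and in fact more explicit than the paper's), and then show $\mu_A(H,Y)=\mu_A(HN,G)$ via the correspondence $L\mapsto LN$ between the intervals $[H,Y]$ and $[HN,G]$. The well-definedness, order-preservation and surjectivity of the induced map $\Phi$ on $A$-classes are handled adequately.

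The gap is that injectivity of $\Phi$ is only announced, not proved: the final paragraph says one ``exploits this cocycle-type freedom to transport $L_2^a$ back to $L_1$,'' but never exhibits the transporting element. Since this is precisely the step where the hypotheses $N\leq K$ and $YN=G$ enter, and hence the entire content of the lemma, it cannot be left as a plan. The missing computation (which is what the paper does) is short: suppose $L_1,L_2\in[H,Y]$ and $L_1N=(L_2N)^x$ with $x\in K$. Since $G=YN$ and $N\leq K\ni x$, write $x=yn$ with $y\in Y\cap K$ and $n\in N$. As $N\leq L_2^{y}N$ and $N\trianglelefteq G$, conjugation by $n$ fixes $L_2^{y}N$, so $(L_2N)^x=(L_2^{y}N)^{n}=L_2^{y}N$. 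Now intersect with $Y$: by Dedekind and $Y\cap N=1$ one has $L_1=L_1N\cap Y$ and $L_2^{y}=L_2^{y}N\cap Y$ (the latter because $L_2\leq Y$ and $y\in Y$), whence $L_1=L_2^{y}$ with $y\in K$, i.e.\ $[L_1]_A=[L_2]_A$. (A similar decomposition $x=yn$ is also what guarantees that every class in the interval $[[H]_A,[Y]_A]$ has a representative lying between $H$ and $Y$, a point you tacitly assume; it follows by observing that $H^{n}\leq HN\cap Y=H$ for $n\in N_N(H)$.) With this inserted, your argument coincides with the paper's proof.
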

\begin{proof}
	If $HN=G,$ then $H$ is a maximal subgroup of $G,$ hence $\mu_A(H,G)=-1,$
	while $\mu_A(HN,G)=\mu_A(G,G)=1$ and 
	$\gamma_A(N,H)=1$, so the statement is true. So we may assume $HN < G$ and
 apply Lemma \ref{cocra}. Suppose $[Y]_A \in \mathcal S_A(H,N).$
	Let $$\begin{aligned}\mathcal C&=\{J\leq G \mid H\leq J \leq Y\}, \quad \mathcal D=\{L\leq G \mid HN\leq L\}
\\\mathcal C_A&=\{[J]_A\in \mathcal C_A(G) \mid [H]_A\leq [J]_A\leq [Y]_A\}, \ \mathcal D_A=\{[L]_A\in \mathcal C_A(G) \mid [HN]_A\leq [L]_A\}.
\end{aligned}$$ The map $\eta: \mathcal C \to \mathcal D$ sending $J$ to $JN$ is an order preserving bijection. Clearly, if $J_2=J_1^x$ for some $x \in K,$ then $\gamma(J_2)=NJ_2=NJ_1^x=(NJ_1)^x=(\gamma(J_1))^x.$ Conversely assume
$\gamma(J_2)=(\gamma(J_1))^x$ with $x \in K.$ Since $YN=G,$ $x=yn$ with $n \in N$ and $y \in Y\cap K.$ Thus $J_2N=(J_1N)^x=(J_1N)^y$
and consequently $J_2=J_2N \cap Y= (J_1N)^y \cap Y= (J_1N\cap Y)^y=J_1^y.$ It follows that $\eta$ induces an order preserving bijection from $\mathcal C_A$ to $\mathcal D_A,$ but then $\mu_A(H,Y)=\mu_A(HN,YN)=\mu_A(HN,G).$
\end{proof}

The statement  of the previous lemma leads to the following open question.

\begin{question}
	Let $G$ be a finite group, $A\leq \aut(G)$ and $N$ an $A$-invariant normal subgroup of $G$. Does $\mu_A(HN,G)$ divide $\mu_A(H,G)$ for every $H\leq G?$
\end{question}

The following lemma is straightforward.
\begin{lemma}\label{quiquo}
	Let $A$ be a subgroup of $\aut(G)$ and
	$N$  an $A$-invariant normal subgroup of $G$. Every $a\in A$ induces an automorphism $\overline a$ of $G/N$. Let $\overline A=\{\overline a \mid a \in A\}.$ 
	Then, for any $H\leq G,$ $\mu_A(HN,G)=\mu_{\overline A}(HN/N,G/N).$
\end{lemma}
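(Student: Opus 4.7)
The plan is to reduce the identity to a poset isomorphism between the interval above $[HN]_A$ in $\mathcal C_A(G)$ and the interval above $[HN/N]_{\overline A}$ in $\mathcal C_{\overline A}(G/N)$, and then invoke the fact that the Möbius function depends only on the isomorphism type of the underlying poset.

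First I would set up the correspondence at the level of subgroups. The classical correspondence theorem gives an order-preserving bijection between subgroups of $G$ containing $N$ and subgroups of $G/N$, sending $K$ to $K/N$. Since $N$ is $A$-invariant, for every $a\in A$ the restriction $a|_N$ preserves $N$, and one checks directly that $(K^a)/N=(K/N)^{\overline a}$ for every subgroup $K\geq N$. Hence the correspondence $K\mapsto K/N$ carries the action of $A$ on $\{K\leq G\mid K\geq N\}$ to the action of $\overline A$ on the subgroups of $G/N$.

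Next I would deduce that the correspondence descends to conjugacy classes. If $K_1,K_2$ are subgroups of $G$ containing $N$, then $K_2=K_1^a$ for some $a\in A$ if and only if $K_2/N=(K_1/N)^{\overline a}$. Therefore $K\mapsto K/N$ induces a bijection between $A$-classes of subgroups of $G$ containing $N$ and $\overline A$-classes of subgroups of $G/N$; moreover this bijection preserves the order defined on these classes, because $[K_1]_A\leq[K_2]_A$ precisely means $K_1^a\leq K_2$ for some $a\in A$, which (since $N\leq K_1\cap K_2$) is equivalent to $(K_1/N)^{\overline a}\leq K_2/N$, i.e.\ $[K_1/N]_{\overline A}\leq[K_2/N]_{\overline A}$.

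Restricting the bijection to the interval with bottom $[HN]_A$ and top $[G]_A$, I obtain a poset isomorphism onto the interval with bottom $[HN/N]_{\overline A}$ and top $[G/N]_{\overline A}$ in $\mathcal C_{\overline A}(G/N)$. Since the Möbius function of a finite poset is determined by the isomorphism class of the closed interval between the two endpoints, one concludes
\[
\mu_A(HN,G)=\mu_{\overline A}(HN/N,G/N),
\]
as required. There is no real obstacle; the only point requiring attention is to verify carefully the equivalence $[K_1]_A\leq[K_2]_A\iff[K_1/N]_{\overline A}\leq[K_2/N]_{\overline A}$ for subgroups containing $N$, which ultimately rests on the $A$-invariance of $N$ and the correspondence theorem.
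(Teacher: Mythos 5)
Your proof is correct and is precisely the argument the paper has in mind: the paper states this lemma without proof, calling it straightforward, and the intended justification is exactly the one you give, namely that the correspondence theorem together with $A$-invariance of $N$ yields a poset isomorphism between the interval above $[HN]_A$ in $\mathcal C_A(G)$ and the interval above $[HN/N]_{\overline A}$ in $\mathcal C_{\overline A}(G/N)$, so the two M\"{o}bius values coincide. No gaps; your careful check of the equivalence $[K_1]_A\leq[K_2]_A\iff[K_1/N]_{\overline A}\leq[K_2/N]_{\overline A}$ is the only point of substance and you handle it correctly.
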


\begin{proof}[Proof of Proposition \ref{uguali}] We work by induction on $|G|\cdot |G:H|$.   The statement is true if $G$ is abelian. Assume that $G^\prime$ contains a minimal normal subgroup, say $N,$ of $G.$
	If $N\leq H,$  then, by Lemma \ref{quiquo} $$\lambda(H,G)=\lambda(H/N,G/N)=\mu_{\overline A}(H/N,G/N)=\mu_A(H,G).$$
	So we may assume $N\not\leq H.$ If $H$ is not an intersection of maximal subgroups of $G,$ then $\lambda(H,G)=\mu_A(H,G)=0$. Suppose  $H=M_1\cap \dots \cap M_t$ where $M_1,\dots, M_t$ are maximal subgroups of $G.$ In particular $N$ is not contained in $M_i$ for some $i,$ so
	$M_i$ is a complement of $N$ in $G$ containing $H$ and $N\cap H=1.$ 
	By Lemma \ref{normalgen},	we have $$\lambda(H,G)=-\lambda(HN,G)\gamma(N,H), \quad \mu_A(H,G)=-\mu_A(HN,G)\gamma_A(N,H), $$
	where $\gamma(N,H)$ is the number of conjugacy classes of complements of $N$ in $G$ containing $H$ and $\gamma_A(N,H)$ is the number of $A$-conjugacy classes of these complements.
	Suppose that $K_1, K_2$ are two conjugated complements if $N$ in $G$ containing $H.$ 
Then $K_2=K_1^n$
for some  $n\in N_N(H).$ Since $N \leq G'\leq K,$ it follows $\gamma(N,H)=\gamma_A(N,H).$ Moreover, by induction, $\lambda(HN,G)=\mu_A(HN,G),$ hence we conclude $\lambda(H,G)=
	\mu_A(H,G).$
\end{proof}

\section{Generalizing a formula of Philip Hall}\label{genhall}
We begin with introducing the functions $\Psi _A(H,t)$ and $\psi _A(H,t)$, analogue of $\Phi (H,t)$ and $\phi (H,t)$ in the general case of any possible subgroup $A$ of $\aut (G)$.

For any $H \in \mathcal C_A(G)$ and any positive integer $t,$ let
\begin{enumerate}
	\item $\Omega_A(H,t)=\bigcup_{a\in A}(H^a)^t;$
	\item $\omega_A(H,t)=|\Omega_A(H,t)|;$
	\item $\Psi_A(H,t)=\{(g_1,\dots,g_t)\in G^t \mid \langle g_1,\dots,g_t\rangle=H^a \text { for some } a \in A\};$
	\item $\psi_A(H,t)=|\Psi_A(H,t)|.$
\end{enumerate}
If $(x_1,\dots,x_t)\in \Omega_A(H,t),$ then
$\langle x_1,\dots,x_t\rangle \leq H^a$ for some $a\in A,$ hence
$\langle x_1,\dots,x_t\rangle=K$ for some $K\leq G$ with $[K]_A\leq [H]_A.$ Thus 
$$\sum_{[K]\leq_A [H]}\psi_A(K,t)=\omega_A(H,t)$$ and therefore, by the
M\"{o}bius inversion formula,
$$\sum_{[H]\in \mathcal C_A(G)}\mu_A(H,G)\omega_A(H,t)=\psi_A(G,t).$$
On the other hand $\psi_A(G,t)=\phi(G,t)$ so we have proved the following formula.
\begin{thm}\label{generalehall}For any finite group $G$ and any subgroup $A$ of $\aut(G),$
$$\phi(G,t)=\sum_{[H] \in \mathcal C_A(G)}\mu_A(H,G)\omega_A(H,t).$$
\end{thm}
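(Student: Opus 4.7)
The plan is to view the right-hand side as the output of applying M\"obius inversion on the poset $\mathcal C_A(G)$ to a natural summation identity, and then to recognise that the inverted identity evaluated at the top element $[G]_A$ is precisely $\phi(G,t)$. To this end, for each class $[H]_A$ and each positive integer $t$, I would introduce two functions: $\omega_A(H,t)=|\bigcup_{a\in A}(H^a)^t|$, counting $t$-tuples lying in at least one $A$-conjugate of $H$, and $\psi_A(H,t)$, counting $t$-tuples $(g_1,\dots,g_t)$ such that $\langle g_1,\dots,g_t\rangle$ is $A$-conjugate to $H$. Both are well-defined on $\mathcal C_A(G)$, since replacing $H$ by $H^a$ merely permutes the sets being measured.

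The key combinatorial step is the identity
$$\omega_A(H,t)=\sum_{[K]_A\leq [H]_A}\psi_A(K,t).$$
I would establish this by partitioning $\Omega_A(H,t)=\bigcup_{a\in A}(H^a)^t$ according to the $A$-conjugacy class of the subgroup generated by the tuple. A tuple $(g_1,\dots,g_t)$ belongs to $\Omega_A(H,t)$ iff $\langle g_1,\dots,g_t\rangle\leq H^a$ for some $a\in A$; setting $K:=\langle g_1,\dots,g_t\rangle$, this is exactly the relation $[K]_A\leq [H]_A$. Since each $t$-tuple determines a unique generated subgroup, and hence a unique class in $\mathcal C_A(G)$, the classes $\Psi_A(K,t)$ with $[K]_A\leq [H]_A$ partition $\Omega_A(H,t)$, giving the identity.

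Applying the M\"obius inversion formula on the finite poset $\mathcal C_A(G)$ then yields
$$\psi_A(G,t)=\sum_{[H]_A\in \mathcal C_A(G)}\mu_A(H,G)\,\omega_A(H,t).$$
Since $[G]_A=\{G\}$ is the unique top element, $\psi_A(G,t)$ counts precisely the $t$-tuples that generate $G$, i.e.\ $\psi_A(G,t)=\phi(G,t)$, and substitution gives the claimed formula. The only genuinely delicate point is verifying both the coverage and the disjointness in the partition used for the key identity; both reduce to the definition of the order on $\mathcal C_A(G)$, so once the M\"obius-paired functions $\omega_A$ and $\psi_A$ are correctly identified, the remainder is essentially formal.
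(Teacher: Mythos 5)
Your proposal is correct and follows essentially the same route as the paper: it introduces the same pair of functions $\omega_A$ and $\psi_A$, derives the identity $\omega_A(H,t)=\sum_{[K]_A\leq [H]_A}\psi_A(K,t)$ by classifying each tuple according to the $A$-class of the subgroup it generates, and concludes by M\"obius inversion together with $\psi_A(G,t)=\phi(G,t)$. No substantive differences from the paper's argument.
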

Notice that if $A=1,$ then $\omega_A(H,t)=|H^t|$, so that the result by Hall given in (\ref{hallfo}) is a particular case of the previous theorem.

\begin{cor}If $G$ is not cyclic, then
	$$0=\phi(G,1)=\sum_{[H] \in  \mathcal C_A(G)}\mu_A(H,G)\omega_A(H,1).$$
\end{cor}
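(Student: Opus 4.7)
The plan is to obtain the corollary as a direct specialization of Theorem \ref{generalehall} at $t=1$, combined with the elementary observation about $\phi(G,1)$.

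First I would apply Theorem \ref{generalehall} with $t=1$. This gives the identity
$$\phi(G,1)=\sum_{[H] \in \mathcal C_A(G)}\mu_A(H,G)\,\omega_A(H,1),$$
valid for any subgroup $A\leq \aut(G)$. This already accounts for the right-hand equality in the statement.

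Next I would recall that $\phi(G,1)$ counts the $1$-tuples $(g)\in G$ with $\langle g\rangle=G$, i.e.\ the number of elements of $G$ generating the whole group. If $G$ is not cyclic, no single element generates $G$, so $\phi(G,1)=0$. Substituting this into the formula yields the left-hand equality, which is the content of the corollary.

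There is no real obstacle here: the work has already been done in the proof of Theorem \ref{generalehall}. The only thing worth noting is that the formula remains non-trivial, since the individual terms $\mu_A(H,G)\,\omega_A(H,1)$ need not vanish; the corollary asserts a genuine cancellation relation among the values of $\mu_A$ weighted by the sizes of the $A$-orbit unions $\Omega_A(H,1)=\bigcup_{a\in A}H^a$.
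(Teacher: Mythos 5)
Your proposal is correct and matches the paper's (implicit) argument exactly: the corollary is stated immediately after Theorem \ref{generalehall} as the specialization $t=1$, combined with the observation that $\phi(G,1)=0$ for non-cyclic $G$ since no single element generates such a group. Nothing further is needed.
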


Taking $A=\inn(G),$ we deduce in particular
 that if
$G$ is not cyclic, then 
$$\sum_{H \in \mathcal C(H)}\lambda(H,G)\omega_{\inn(G)}(H,1)=\sum_{H \in \mathcal C(H)}\lambda(H,G)|\cup_g H^g|=0.$$
\

For example, if $G=S_4,$ then the values of $\lambda(H,G)$ and $|\cup_g H^g|$ are as in the following table and
$24-12-16-15+4+9+7-1=0.$

\begin{center}
	\begin{tabular}{|c|c|c|}
		\hline
		&$\lambda(H,G)$&$|\cup_g H^g|$\\
		\hline
		$S_4$ &          1 &         24 \\
		\hline
		$A_4$ &         -1 &         12 \\
		\hline
		$D_4$ &         -1 &         16 \\
		\hline
		$S_3$ &         -1 &         15 \\
		\hline	
		$K$ &          1 &          4 \\
		\hline
		$\langle (1,2,3,4)\rangle$ &          0 &         10 \\
		\hline
		$\langle (1,2,3)\rangle$ &          1 &          9 \\
		\hline
		$\langle (1,2\rangle$ &          1 &          7 \\
		\hline
		$\langle (1,2)(3,4)\rangle$ &          0 &          4 \\
		\hline
		1 &         -1 &          1 \\
		\hline
	\end{tabular}  
\end{center}

\

If $G=A_5,$ then the values of $\lambda(H,G)$, $\omega_{\inn(G)}(H,1)=|\cup_g H^g|$, $\omega_{\inn(G)}(H,2)=|\cup_g (H^g)^2|$ (taking only the subgroups $H$ with $\lambda(H,G)\neq 0$) are as in the following table and 60-36-36-40+21+32-1=0.

\

\begin{center}
	\begin{tabular}{|c|c|c|c|}
		\hline
		&$\lambda(H,G)$&$|\cup_g H^g|$&$|\cup_g (H^g)^2|$\\
		\hline
		$A_5$ &          1 &         60&3600 \\
		\hline
		$A_4$ &         -1 &        36& 636 \\
		\hline
		$S_3$ &         -1 &         36& 306 \\
		\hline
		$D_5$ &         -1 &         40&550 \\
		\hline	
		$\langle (1,2,3)\rangle$ &          1 &          21&81 \\
		\hline
		$\langle (1,2)(3,4)\rangle$ &          2 &          16&46 \\
		\hline
		1 &         -1 &          1&1 \\
		\hline
	\end{tabular}  
\end{center}
Moreover
$$
3600-636-306-550+81+2\cdot 46-1=2280=\frac{19}{30}\cdot 3600=\phi(A_5,2).
$$

\

If $G=D_{p}=\langle a, b \mid a^p=1, b^2=1, a^b=a^{-1}\rangle$ and $p$ is an odd prime, then  the behaviour of the subgroups in $\mathcal C(G)$ is described by the following table.

\

\begin{center}
	\begin{tabular}{|c|c|c|}
		\hline
		&$\lambda(H,G)$&$|\cup_g H^g|$\\
		\hline
		$D_p$ &          1 &         $2p$ \\
		\hline
		$\langle a\rangle $ &         -1 &       $p$ \\
		\hline
		$\langle b\rangle$ &         -1 &         $p+1$ \\
		\hline
		1 &         -1 &          1 \\
		\hline
	\end{tabular}  
\end{center}

\

Another interesting example is given by considering $G=C_p^n$ and $A=\aut(G).$ Let $H\cong C_p^{n-1}$ be a maximal subgroup of $G.$ Then, for $K\leq G$,  $\mu_A(K,G)\neq 0$ if and only if either $[K]_A=[G]_A$ or $[K]_A=[H]_A.$ Clearly $\cup_{\alpha\in \aut(G)} H^\alpha=G$ so 
$\mu_A(G,G)\omega_A(G,1)-\mu_A(H,G)\omega_A(H,1)=|G|-|G|=0.$ More in general $\Omega_A(H,t)$
is the set of $t$-tuples $(x_1,\dots,x_t)$ such that
$(x_1,\dots,x_t) \in K^t$ for some maximal subgroup $K$ of $G$, so
${\mu_A(G,G)}\omega_A(G,t)-{\mu_A(H,G)}\omega_A(H,t)=|G|^t-\omega_A(H,t)$ is the number of generating $t$-tuples of $G.$

\

Another generalization of (\ref{hallfo}), essentially due to Gasch{\"u}tz,  has been described by Brown in \cite[Section 2.2]{brown}.
Let $N$ be a normal subgroup of $G$ and suppose that $G/N$ admits $t$ generators for some  integer $t$. Let $y=(y_1,\dots,y_t)$ be a generating $t$-tuple of $G/N$ and denote by $P(G,N,t)$ the probability that a random lift of $y$ to a $t$-tuple of $G$  generates $G$. Then $P(G,N,t)=\phi(G,N,t)|N|^t$, where $\phi(G,N,t)$ is the number of generating $t$-tuples of $G$ lying over $y$. Using again the M\"{o}bius inversion formula it can be proved:
\begin{equation}\label{relhall}
\phi(G,N,t)=\sum_{H \leq G, HN=G}\mu(H,G)|H\cap N|^t.
\end{equation}
This formula can be generalized in our contest in the following way:
\begin{thm}
Let $N$ be an $A$-invariant normal subgroup of $G$ and
fix  $g_1,\dots,g_t \in G$ with the property that $G=\langle g_1,\dots,g_t\rangle N.$  Define
\begin{itemize}
	\item $\Omega_A(H,N,t)=\{(n_1,\dots,n_t)\mid \langle g_1n_1,\dots,g_tn_t\rangle  \leq H^a \text { for some } a\in A\};$
\item $\omega_A(H,N)=|\Omega_A(H,N,t)|$
\end{itemize}
and let $\mathcal C_A(G,N)=\{[H]_A \in \mathcal C_A(G) \mid HN=G\}.$
Then
$$\phi(G,N,t)=\sum_{[H]_A \in \mathcal C_A(G,N)} \mu_A(H,G)\omega_A(H,N,t).$$
\end{thm}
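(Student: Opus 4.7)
The plan is to mimic the proof of Theorem \ref{generalehall}, replacing the ``full'' tuple-space $\bigcup_{a\in A}(H^a)^t$ by its analogue adapted to lifts of the fixed tuple $(g_1N,\dots,g_tN)$. Concretely, alongside $\omega_A(H,N,t)$ I introduce the refined counting function
$$\psi_A(H,N,t)=\bigl|\{(n_1,\dots,n_t)\in N^t \mid \langle g_1n_1,\dots,g_tn_t\rangle=H^a \text{ for some } a\in A\}\bigr|.$$
By construction $\psi_A(G,N,t)=\phi(G,N,t)$, so the theorem will follow by M\"obius inversion once we have an additive identity expressing $\omega_A(H,N,t)$ as a sum of $\psi_A(K,N,t)$'s over $[K]_A\leq [H]_A$.

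First I would verify two elementary points. (i) Both $\omega_A(\cdot,N,t)$ and $\psi_A(\cdot,N,t)$ are well defined on $A$-classes, since replacing $H$ by $H^a$ permutes the $H^b$'s occurring in both definitions. (ii) If $\omega_A(K,N,t)\neq 0$, then $KN=G$: indeed, for any tuple in $\Omega_A(K,N,t)$ the subgroup $\langle g_1n_1,\dots,g_tn_t\rangle$ projects onto $\langle g_1N,\dots,g_tN\rangle=G/N$, so that subgroup together with $N$ already fills out $G$, forcing $K^aN=G$ and hence $KN=G$ since $N$ is $A$-invariant.

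Next I would prove the key identity
\begin{equation*}
\omega_A(H,N,t)=\sum_{[K]_A\leq[H]_A}\psi_A(K,N,t).
\end{equation*}
For each $(n_1,\dots,n_t)\in\Omega_A(H,N,t)$ let $K=\langle g_1n_1,\dots,g_tn_t\rangle$; its $A$-class is uniquely determined by the tuple, and the condition $\langle g_1n_1,\dots,g_tn_t\rangle\leq H^a$ for some $a\in A$ translates exactly to $[K]_A\leq[H]_A$. The tuple is then counted in $\psi_A(K,N,t)$ (and in no other $\psi_A(K',N,t)$), which establishes the claimed partition. Note that only $[K]_A\in\mathcal C_A(G,N)$ can contribute, by observation (ii) above.

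Now M\"obius inversion in the poset $\mathcal C_A(G)$ yields
$$\psi_A(H,N,t)=\sum_{[K]_A\leq[H]_A}\mu_A(K,H)\,\omega_A(K,N,t).$$
Specializing to $H=G$, using $\psi_A(G,N,t)=\phi(G,N,t)$, and restricting the range of summation via the vanishing observation (ii), gives the desired formula. There is no real obstacle beyond bookkeeping; the only point that requires a moment of care is the well-definedness of both functions on $A$-classes and the observation that the subgroup generated by a fixed lift has a single, unambiguous $A$-class, which is what makes the partition in the key identity disjoint.
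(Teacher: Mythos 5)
Your proposal is correct and follows essentially the same route as the paper's own proof: you introduce the same refined counting function $\psi_A(H,N,t)$, establish the same partition identity $\omega_A(H,N,t)=\sum_{[K]_A\leq[H]_A}\psi_A(K,N,t)$, and conclude by M\"obius inversion, restricting the sum via the observation that $\omega_A$ vanishes off $\mathcal C_A(G,N)$. The extra checks you include (well-definedness on $A$-classes and the implication $\omega_A(K,N,t)\neq 0\Rightarrow KN=G$) are correct and only make the argument more complete than the paper's.
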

\begin{proof}
Fix  $g_1,\dots,g_t \in G$ with the property that $G=\langle g_1,\dots,g_t\rangle N.$ Then $\phi(G,N,t)$ is the cardinality of the set
$$\Phi(G,N,g_1,\dots,g_t)=\{(n_1,\dots,n_t)\in N^t \mid
\langle g_1n_1, \dots, g_tn_t\rangle=G\}.$$ Set:
$$\begin{aligned}&\Psi_A(H,N,g_1,\dots,g_t)=\{(n_1,\dots,n_t)\!\in\! N^t \mid \langle g_1n_1,\dots,g_tn_t\rangle\!=\! H^a \text { for some } a \in A\};\\
&\psi_A(H,N,t)=|\Psi_A(H,N,g_1,\dots,g_t)|.
\end{aligned}
$$
Notice that $\omega_A(H,N,t)\neq 0$ if and only if $[H]_A\in  \mathcal C_A(G,N).$ If $(n_1,\dots,n_t)\in \Omega_A(H,N,t),$ then
$\langle g_1n_1,\dots,g_tn_t\rangle \leq H^a$ for some $a\in A,$ and
$\langle g_1n_1,\dots,g_tn_t\rangle$ $=K$ for some $K\leq G$ with $[K]_A\leq [H]_A.$ Thus 
$$\sum_{[K]_A\leq [H]_A}\psi_A(K,N,t)=\omega_A(H,N,t)$$ and therefore, by the
M\"{o}bius inversion formula
$$\sum_{[H]\in \mathcal C_A(G,N)}\mu_A(H,G)\omega_A(H,N,t)=\psi_A(G,N,t)=\phi(G,N,t)\qedhere$$
\end{proof}

\section{Another application of M\"{o}bius inversion formula}\label{numbersbgs}

Denote by $\Phi^*(G,t)$ the set
 of $t$-tuples $(H_1,\dots,H_t)$ of subgroups of $G$ such that
$G=\langle H_1,\dots,H_t\rangle$ and by $\phi^*(G,t)$ the cardinality of this set.
For any $H \in \mathcal C_A(G)$ and any positive integer $t,$ let
\begin{enumerate}
	\item $\Sigma_A(H,t)=\{(H_1,\dots,H_t)\mid \langle H_1,\dots,H_t \rangle \leq H^a \text{ for some } a \in A\};$
	\item $\sigma_A(H,t)=|\Sigma_A(H,t)|;$
	\item $\Gamma_A(H,t)=\{(H_1,\dots,H_t) \mid \langle H_1,\dots,H_t\rangle=H^a \text { for some } a \in A\};$
	\item $\gamma_A(H,t)=|\Gamma_A(H,t)|.$
\end{enumerate}
\begin{thm}
	$$\phi^*(G,t)=\sum_{[H]\in \mathcal C_A(G)}\mu_A(H,G)\sigma_A(H,t).$$
\end{thm}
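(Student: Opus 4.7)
The plan is to imitate almost verbatim the Möbius-inversion argument used for Theorem \ref{generalehall}, replacing tuples of elements with tuples of subgroups. The two ingredients are that $\gamma_A(H,t)$ and $\sigma_A(H,t)$ depend only on the class $[H]_A$, and that $\Sigma_A(H,t)$ partitions into pieces indexed by classes $[K]_A \leq [H]_A$.

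First I would check well-definedness: if $H' = H^a$ for some $a \in A$, then $\langle H_1,\dots,H_t\rangle = (H')^b$ happens exactly when $\langle H_1,\dots,H_t\rangle = H^{ab}$, so $\Gamma_A(H,t)=\Gamma_A(H',t)$, and likewise for $\Sigma_A$. Hence both functions are well-defined on $\mathcal C_A(G)$. Next, given any tuple $(H_1,\dots,H_t) \in \Sigma_A(H,t)$, the join $K = \langle H_1,\dots,H_t\rangle$ is a specific subgroup of $G$, so the class $[K]_A \in \mathcal C_A(G)$ is uniquely determined and satisfies $[K]_A \leq [H]_A$; the tuple belongs to $\Gamma_A(K,t)$ and to no other $\Gamma_A(K',t)$ with $[K']_A \neq [K]_A$. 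This gives the disjoint decomposition
\[
\Sigma_A(H,t) \;=\; \bigsqcup_{[K]_A \leq [H]_A} \Gamma_A(K,t),
\]
which in cardinalities reads
\[
\sigma_A(H,t) \;=\; \sum_{[K]_A \leq [H]_A} \gamma_A(K,t).
\]

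Now I would apply the Möbius inversion formula in the poset $\mathcal C_A(G)$ to obtain
\[
\gamma_A(G,t) \;=\; \sum_{[H]_A \in \mathcal C_A(G)} \mu_A(H,G)\,\sigma_A(H,t).
\]
Finally, since $[G]_A = \{G\}$, a tuple $(H_1,\dots,H_t)$ lies in $\Gamma_A(G,t)$ precisely when $\langle H_1,\dots,H_t\rangle = G$; thus $\gamma_A(G,t) = \phi^*(G,t)$, and the theorem follows.

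There is essentially no obstacle: the only thing to be a little careful about is the uniqueness of the class $[K]_A$ assigned to each tuple in $\Sigma_A(H,t)$ (so that the sum $\sigma_A(H,t) = \sum_{[K]_A \leq [H]_A}\gamma_A(K,t)$ is not double-counting), but this is automatic because the join $\langle H_1,\dots,H_t\rangle$ is a single, well-defined subgroup of $G$.
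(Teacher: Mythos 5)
Your proposal is correct and follows exactly the paper's own argument: the decomposition $\sigma_A(H,t)=\sum_{[K]_A\leq [H]_A}\gamma_A(K,t)$ followed by M\"{o}bius inversion in $\mathcal C_A(G)$ and the observation that $\gamma_A(G,t)=\phi^*(G,t)$. The extra checks you include (well-definedness on classes, disjointness of the decomposition) are sound and only make explicit what the paper leaves implicit.
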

\begin{proof}
If $(H_1,\dots,H_t)\in \Sigma_A(H,t),$ then
$\langle H_1,\dots,H_t\rangle=K$ for some $K\leq G$ with $[K]_A\leq [H]_A.$ Thus 
$$\sum_{[K]\leq_A [H]}\gamma_A(K,t)=\sigma_A(H,t)$$ and therefore, by the
M\"{o}bius inversion formula,
$$\sum_{[H]\in \mathcal C_A(G)}\mu_A(H,G)\sigma_A(H,t)=\gamma_A(G,t)=\phi^*(G,t).\qedhere$$
\end{proof}

In the particular case when $A=1,$ $\sigma_A(H,t)=\sigma(H)^t,$ denoting with $\sigma(H)$ the number of subgroups of $H.$ So we obtain the following corollary:

\begin{cor}
$$\phi^*(G,t)=\sum_{H \leq G}\mu(H,G)\sigma(H)^t.$$
\end{cor}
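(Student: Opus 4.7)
The plan is to obtain the corollary by specializing the preceding theorem to the case $A=1$ and then identifying $\sigma_A(H,t)$ with $\sigma(H)^t$ in that case.

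First, observe that when $A=1$, the $A$-conjugacy class of any subgroup $H\leq G$ is the singleton $\{H\}$, so the map $H\mapsto [H]_1$ is a bijection between the subgroup lattice of $G$ and $\mathcal C_1(G)$, and under this identification the order on $\mathcal C_1(G)$ coincides with containment. Consequently $\mu_1(H,G) = \mu(H,G)$, and the sum in the theorem can be reindexed over subgroups $H\leq G$.

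Next I would unfold the definition of $\Sigma_A(H,t)$ for $A=1$: a tuple $(H_1,\dots,H_t)$ lies in $\Sigma_1(H,t)$ iff $\langle H_1,\dots,H_t\rangle \leq H$. Since $\langle H_1,\dots,H_t\rangle$ contains each $H_i$, the latter condition is equivalent to $H_i\leq H$ for every $i=1,\dots,t$. Hence $\Sigma_1(H,t)$ is exactly the set of $t$-tuples of subgroups of $H$, which has cardinality $\sigma(H)^t$, so $\sigma_1(H,t) = \sigma(H)^t$.

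Substituting these two identifications into the formula
$$\phi^*(G,t)=\sum_{[H]\in \mathcal C_A(G)}\mu_A(H,G)\sigma_A(H,t)$$
with $A=1$ yields the stated equality. There is no genuine obstacle here: the content is entirely in the preceding theorem, and the corollary amounts to verifying the two easy simplifications $\mu_1 = \mu$ and $\sigma_1(H,t) = \sigma(H)^t$.
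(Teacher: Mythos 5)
Your proposal is correct and follows the same route as the paper: the corollary is obtained by specializing the preceding theorem to $A=1$ and observing that $\sigma_1(H,t)=\sigma(H)^t$ since $\langle H_1,\dots,H_t\rangle\leq H$ exactly when every $H_i\leq H$. The paper states this in one line; your write-up merely makes the two identifications explicit.
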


Clearly $\Sigma^*(G,t)=\{G\},$ so $\phi^*(G,1)=1$ and therefore it follows:

\begin{cor}
	$$1=\sum_{H \in \mathcal \mathcal H_A}\mu_A(H,G)\sigma_A(H,1).$$
\end{cor}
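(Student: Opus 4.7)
The plan is simply to specialize the preceding theorem to $t=1$. The only thing that genuinely needs to be checked is that $\phi^*(G,1) = 1$: by definition, $\Phi^*(G,1)$ is the set of $1$-tuples $(H_1)$ of subgroups of $G$ with $\langle H_1\rangle = G$, and this condition forces $H_1 = G$, so $\Phi^*(G,1) = \{(G)\}$. The remark preceding the corollary (``$\Sigma^*(G,t)=\{G\}$'', which appears to be a typographical slip for $\Phi^*(G,1) = \{(G)\}$) is exactly this observation.

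With $\phi^*(G,1) = 1$ in hand, substituting $t=1$ into the identity
$$\phi^*(G,t)=\sum_{[H]\in \mathcal C_A(G)}\mu_A(H,G)\sigma_A(H,t)$$
just proved yields
$$1=\sum_{[H]\in \mathcal C_A(G)}\mu_A(H,G)\sigma_A(H,1),$$
which is the desired identity (the symbol $\mathcal H_A$ in the statement evidently abbreviating $\mathcal C_A(G)$). There is no serious obstacle: the corollary is immediate from the theorem, and its content is simply the trivial observation that $G$ admits a unique generating $1$-tuple of subgroups, namely $(G)$ itself. The only mildly interesting reformulation is to unpack $\sigma_A(H,1)$, which counts the subgroups of $G$ lying in some $A$-conjugate $H^a$ of $H$; in the special case $A=1$, this recovers the statement $1=\sum_{H\leq G}\mu(H,G)\sigma(H)$ highlighted in the introduction.
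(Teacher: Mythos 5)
Your proposal is correct and follows exactly the paper's own route: observe that $\phi^*(G,1)=1$ because the only generating $1$-tuple of subgroups is $(G)$ itself, then set $t=1$ in the preceding theorem. Your reading of the notational slips ($\Sigma^*(G,t)=\{G\}$ for $\Phi^*(G,1)=\{(G)\}$, and $\mathcal H_A$ for $\mathcal C_A(G)$) matches the evident intent of the text.
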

In particular:
\begin{cor}\label{somma1}
	$$1=\sum_{H \leq G}\mu(H,G)\sigma(H).$$
\end{cor}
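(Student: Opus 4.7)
The plan is to obtain Corollary \ref{somma1} as the specialization $A=1$ of the corollary stated immediately above it. With $A$ the trivial subgroup of $\aut(G)$, every $A$-conjugacy class $[H]_A$ collapses to the singleton $\{H\}$, so the indexing set $\mathcal{C}_A(G)$ is identified with the subgroup lattice of $G$ and $\mu_A$ becomes Hall's classical $\mu$. Under the same specialization, the definition $\Sigma_A(H,1)=\{(H_1)\mid\langle H_1\rangle\leq H^a\text{ for some }a\in A\}$ reduces to $\{H_1\leq G\mid H_1\leq H\}$, so $\sigma_A(H,1)$ equals $\sigma(H)$, the number of subgroups of $H$. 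Substituting these identifications into the preceding corollary yields exactly the asserted equality.

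If one prefers a self-contained argument, one can apply M\"obius inversion directly. The starting identity is $\sigma(H)=\sum_{K\leq H}1$. Inserting this into the sum and swapping the order of summation gives
\begin{equation*}
\sum_{H\leq G}\mu(H,G)\sigma(H)=\sum_{K\leq G}\sum_{K\leq H\leq G}\mu(H,G),
\end{equation*}
and the inner sum equals $\delta_{K,G}$, the characteristic function of $K=G$, by the defining relation between the zeta and M\"obius functions of the subgroup lattice (read as the identity $(\zeta\mu)(K,G)=\delta_{K,G}$). Only the term $K=G$ survives and contributes $1$.

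No step here is genuinely delicate: the corollary is a formal consequence of the preceding theorem on $\phi^*(G,t)$, together with the trivial observation $\phi^*(G,1)=1$ (the only generating $1$-tuple of subgroups of $G$ is $(G)$ itself). The substantive content sits in the M\"obius inversion used to establish the $\phi^*$-formula, not in this corollary.
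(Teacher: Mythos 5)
Your first argument is exactly the paper's: Corollary \ref{somma1} is obtained by specializing the preceding corollary (equivalently, the $\phi^*$-theorem with $A=1$ and $t=1$) together with the observation that $\phi^*(G,1)=1$ since $(G)$ is the only generating $1$-tuple of subgroups. Your alternative self-contained derivation via $\sigma(H)=\sum_{K\leq H}1$, interchange of summation, and the convolution identity $(\zeta\mu)(K,G)=\delta_{K,G}$ is also correct; it is essentially the same M\"obius inversion that underlies the paper's $\phi^*$-theorem, unwound directly in the case $t=1$.
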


For example, if $G=A_5$ then the subgroups of $G$ with $\mu(H,G)\neq 0$ are listed in the following table (where $\kappa(H,G)$ denote the numbers of conjugate of $H$ in $G$).

\begin{center}
	\begin{tabular}{|c|c|c|c|}
		\hline
		&$\mu(H,G)$&$\kappa(H,G)$&$\sigma(H)$\\
		\hline
		$A_5$ &          1 &         1&59 \\
		\hline
		$A_4$ &         -1 &        5& 10 \\
		\hline
		$S_3$ &         -1 &        10& 6 \\
		\hline
		$D_5$ &         -1 &         6&8 \\
		\hline	
		$\langle (1,2,3)\rangle$ &          2 &         10&2 \\
		\hline
		$\langle (1,2)(3,4)\rangle$ &          4 &          15&2 \\
		\hline
	1 &         -60 &          1&1 \\
		\hline
	\end{tabular}  
\end{center}
According with Corollary \ref{somma1},
$1=59-5\cdot 10-10\cdot 6-6\cdot 8 +2\cdot 10\cdot 2 +4\cdot 15 \cdot 2-60.$

\

For a finite group $G$, denote by $P(G,t)$ and $P^*(G,t)$ the probability of generating $G$ with, respectively, $t$ elements or $t$ subgroups. It can be easily seen that $P(G,t)=P(G/\frat(G),t),$ but in general $P^*(G,t)\neq P^*(G/\frat(G),t).$
For example, if $G\cong C_{p^a}$,  then $G$ and $H\cong C_{p^{a-1}}$ are the unique subgroups of $G$ with non trivial M\"{o}bius number and therefore
$$\begin{aligned}P(G,t)&=\frac{|G|^t-|H|^t}{|G|^t}=1-\frac{1}{p^t},\\
	P^*(G,t)&=\frac{\sigma(G)^t-\sigma(H)^t}{\sigma(G)^t}=1-\frac{a^t}{(a+1)^t}.
	\end{aligned}$$
So $P(G,t)$ is independent on $a$, while
$P^*(G,t)$ tends to 0 when $a$ tends to infinity.

\section{The $(\mu,\lambda)$-property}\label{lamu}

 \begin{proof}[Proof of Thereom \ref{lamuso}] Working by induction on the order of $G,$ it suffices to prove the statement in the particular case when $N$ is an abelian minimal normal subgroup of $G.$
 	Let $H$ be  a subgroup of $G.$ If $N\leq H,$ then $$\begin{aligned}\mu(H,G)&=\mu(H/N,G/N)=\lambda(H/N,G/N)|N_{G^\prime N/N}(H/N):H/N \cap G^\prime N/N|\\&=\lambda(H,G)|N_{G^\prime N}(H):H \cap G^\prime N|=\lambda(H,G)|NN_{G^\prime }(H):N(H \cap G^\prime)|\\&
 	=\lambda(H,G)\frac{|N_{G^\prime }(H):H \cap G^\prime|}{|N\cap N_{G^\prime}(H):N\cap H\cap G^\prime|}
 	=\lambda(H,G)\frac{|N_{G^\prime }(H):H \cap G^\prime|}{|N\cap G^\prime:N\cap G^\prime|}\\&=\lambda(H,G){|N_{G^\prime }(H):H \cap G^\prime|}.
 	\end{aligned}$$
 	So we may assume $N\not\leq H.$ If $H$ is not an intersection of maximal subgroups of $G,$ then $\mu(G,H)=\lambda(G,H)=0$. So we may assume $H=M_1\cap \dots \cap M_t$ where $M_1,\dots, M_t$ are maximal subgroups of $G.$ Since $N$ is not contained in $H,$ then $N$ is not contained in $M_i$ for some $i,$ but then
 	$M_i$ is a complement of $N$ in $G$ containing $H$ and $N\cap H=1.$ If $g \in N_G(HN),$ then $g=xn$ with $x \in M_i$ and $n\in N.$ In particular $H^x\leq HN \cap M_i=H(N\cap M_i)=H$, so $N_G(HN)=N_G(H)N.$
 By Lemma \ref{normalgen},	we have $$\frac{\mu(H,G)}{\lambda(H,G)}=\frac{\mu(HN,G)}{\lambda(HN,G)}\frac{\kappa}{\delta}=
 	|N_{G^\prime N}(HN):HN\cap G^\prime N|\frac{\kappa}{\delta}=|NN_{G^\prime}(H):HN\cap G^\prime N|\frac{\kappa}{\delta}	$$
 	where $k$ is the number of complements of $N$ in $G$ containing $H$ and $\delta$ is the number of conjugacy classes of these complements.
 	First assume that $N\leq Z(G).$ Then $\kappa=\delta$, $G^\prime=M_i^\prime\leq M_i,$ $N\cap G^\prime=1$ and
 	$$\begin{aligned}\frac{\mu(H,G)}{\lambda(H,G)}&=|NN_{G^\prime}(H):HN\cap G^\prime N|\frac{\kappa}{\delta}=|NN_{G^\prime}(H):HN\cap G^\prime N|\\&=|NN_{G^\prime}(H):N(H\cap G^\prime)|=
 	|N_{G^\prime}(H):H\cap G^\prime|.\end{aligned}$$
 	Finally assume $N\not\leq Z(G).$ Then $N \leq G^\prime$, $\kappa/\delta=|N_N(H)|$ and
 	$$\begin{aligned}\frac{\mu(H,G)}{\lambda(H,G)}&=|NN_{G^\prime}(H):HN\cap G^\prime N|\frac{\kappa}{\delta}=|NN_{G^\prime}(H):N(H\cap G^\prime)||N_N(H)|\\&=\frac{|N||N_{G^\prime}(H)|}{|N_N(H)|}\frac{|N_N(H)|}{|N||H\cap G^\prime|}=
 		|N_{G^\prime}(H):H\cap G^\prime|.\qedhere\end{aligned}$$
 \end{proof}

\begin{proof}[Proof of Corollary \ref{psu}]
	Suppose that $G$ has minimal order with respect to the property that $G$ does not 
	satisfy the $(\mu,\lambda)$ property. By the previous proposition, $G$ contains no abelian minimal normal subgroup and therefore $\soc(G)=S_1\times\dots \times S_t$ is a direct product of nonabelian finite simple groups. If $|G|\leq |PSU(3,3)|=6048$, then either $t=1$ or
	$G=\soc(G)=A_5\times A_5.$  So it suffices to check that $A_5\times A_5$ and any almost simple group of order at most 6048 satisfies the $(\mu,\lambda)$ property. Since, for
	every $H\leq G,$ $\lambda(H,G)$ and
	$\mu(H,G)$ can be computed
	from the table of marks of $G$ (see
	\cite[Proposition 1]{pah}), this task can be easely completed using the library of table of marks available in GAP \cite{GAP4}.
\end{proof}
We may use Theorem \ref{generalehall} to deduce some consequences of the $(\mu,\lambda)$-property.

\begin{thm}Suppose that a finite group $G$ satisfies the  $(\mu,\lambda)$-property.  Then
\begin{equation}\label{strana}
\sum_{[H] \in \mathcal C(G)}\lambda(H,G)\left(\frac{|H|^{t-1}|G||G^\prime  H|}{|G^\prime N_G(H)|}-\omega(H,t)\right)=0.
\end{equation}
\end{thm}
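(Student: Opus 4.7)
The plan is to equate two expressions for $\phi(G,t)$: Hall's classical formula (\ref{hallfo}) regrouped by conjugacy classes, and the specialization of Theorem~\ref{generalehall} to $A=\inn(G)$. After applying the $(\mu,\lambda)$-property to convert $\mu$ into $\lambda$, elementary index identities will transform the resulting coefficient into the one displayed in (\ref{strana}).

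First I would apply Theorem~\ref{generalehall} with $A=\inn(G)$ to obtain
\begin{equation*}
\phi(G,t)=\sum_{[H]\in\mathcal C(G)}\lambda(H,G)\,\omega(H,t).
\end{equation*}
On the other hand, Hall's formula (\ref{hallfo}) can be regrouped by conjugacy class, using that every $H'\in [H]$ contributes the same summand $\mu(H,G)|H|^t$ and that $|[H]|=|G:N_G(H)|$:
\begin{equation*}
\phi(G,t)=\sum_{[H]\in\mathcal C(G)}\mu(H,G)\,|H|^t\,\frac{|G|}{|N_G(H)|}.
\end{equation*}
Since $G$ satisfies the $(\mu,\lambda)$-property, we may substitute $\mu(H,G)=|N_{G^\prime}(H):G^\prime\cap H|\cdot \lambda(H,G)$. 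Equating the two expressions for $\phi(G,t)$ and bringing everything to one side yields
\begin{equation*}
\sum_{[H]\in\mathcal C(G)}\lambda(H,G)\!\left(\frac{|N_{G^\prime}(H)|}{|G^\prime\cap H|}\cdot\frac{|H|^t|G|}{|N_G(H)|}-\omega(H,t)\right)=0.
\end{equation*}

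It remains to recognize the first term inside the parenthesis as $\frac{|H|^{t-1}|G||G^\prime H|}{|G^\prime N_G(H)|}$. For this I would use the two standard product-of-subgroups formulas
\begin{equation*}
|N_{G^\prime}(H)|=|G^\prime\cap N_G(H)|=\frac{|G^\prime|\,|N_G(H)|}{|G^\prime N_G(H)|},\qquad \frac{|H|}{|G^\prime\cap H|}=\frac{|G^\prime H|}{|G^\prime|}.
\end{equation*}
Substituting the first into the coefficient cancels $|N_G(H)|$ in the denominator, and then the second produces exactly the factor $|G^\prime H|/|G^\prime N_G(H)|$ multiplied by $|H|^{t-1}|G|$, giving the desired equality.

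The argument involves no real obstacle: the non-trivial input is already contained in Theorem~\ref{generalehall} and the $(\mu,\lambda)$-property, so the only care needed is in the bookkeeping of indices—in particular remembering that summing the subgroup-indexed Hall formula over a single conjugacy class introduces the factor $|G:N_G(H)|$, not $|G:H|$, and that $N_{G^\prime}(H)$ is interpreted as $G^\prime\cap N_G(H)$ rather than $N_{G^\prime}(G^\prime\cap H)$.
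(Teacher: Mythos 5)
Your proposal is correct and follows essentially the same route as the paper: both equate the specialization of Theorem~\ref{generalehall} at $A=\inn(G)$ with Hall's formula regrouped by conjugacy classes (picking up the factor $|G:N_G(H)|$), substitute via the $(\mu,\lambda)$-property, and simplify the coefficient using the product formulas for $|G^\prime N_G(H)|$ and $|G^\prime H|$. The index bookkeeping you carry out matches the paper's chain of equalities exactly.
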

\begin{proof}
By Theorem \ref{generalehall}, $$\begin{aligned}\sum_{H \in \mathcal C(G)}\lambda(H,G)\omega(H,t)&=\phi(G,t)=\sum_{H\leq G}\mu(H,G)|H|^t\\&=
\sum_{H \in \mathcal C(G)}\mu(H,G)|G:N_G(H)||H^t|\\
&=
\sum_{H \in \mathcal C(G)}\lambda(H,G)|N_{G^\prime}(H) : G^\prime \cap H||G:N_G(H)||H^t|\\
&=\sum_{H \in \mathcal C(G)}\lambda(H,G)\frac{|H|^{t}|G||N_{G^\prime}(H)|}{|G^\prime \cap H||N_G(H)|}\\&=
\sum_{H \in \mathcal C(G)}\lambda(H,G)\frac{|H|^{t-1}|G||G^\prime  H|}{|G^\prime N_G(H)|}. \qedhere
\end{aligned}$$
\end{proof}
A natural question is whether (\ref{strana}) is also a sufficient condition for  the $(\mu,\lambda)$-property. For any $H\leq G,$ set
$\mu^*(H,G)=|N_{G^\prime}(H) : G^\prime \cap H|\lambda(H,G)$. 
The validity of $(\ref{strana})$ is equivalent to
$$\sum_{H \in \mathcal C(G)}\lambda(H,G)\omega(H,t)-\sum_{H \in \mathcal C(G)}\mu^*(H,G)|H|^t|G:N_G(H)|=0.$$
In any case we must have
$$\sum_{H \in \mathcal C(G)}\lambda(H,G)\omega(H,t)-\sum_{H \in \mathcal C(G)}\mu(H,G)|H|^t|G:N_G(H)|=0.$$
So  $(\ref{strana})$ is equivalent to
$$\sum_{H \in \mathcal C(G)}\frac{(\mu(H,G)-\mu^*(H,G))|H|^t}{|N_G(H)| }=0.$$
Let $\mathcal T=\{ [H] \in \mathcal C(G) \mid \mu(H,G)\neq \mu^*(H,G)\}.$
Then $(\ref{strana})$ is true if and only if
\begin{equation}\label{presunti}\sum_{[H] \in \mathcal T}\frac{(\mu(H,G)-\mu^*(H,G))|H|^t}{|N_G(H)| }=0.
\end{equation}

\

For example, if $G=PSU(3,3),$ then $\mathcal T$ consists of four conjugacy classes of subgroups and the corresponding values are given by the following table:

\

\begin{center}
	\begin{tabular}{|c|c|c|c|c|}
		\hline
		$\mu(H,G)$&$\mu^*(H,G)$&$|H|$&$|N_G(H)|$\\
		\hline
		-48 &  0 &   2 &  96 \\
		\hline
		3 &  0 &   6 &  18\\
		\hline
		0 &  -4 &   8 &  32 \\
		\hline
		1 &  2 &   24 &  24 \\
		\hline
	\end{tabular}  
\end{center}
In this case (\ref{presunti}) is equivalent to
$$2^{t-1}-6^{t-1}-8^{t-1}+24^{t-1}=0$$
which is true only if $t=1.$

\

For any positive integer $n$ let
$$\tau(n)=\sum_{H \in \mathcal T, |H|=n}\frac{\mu(H,g)-\mu^*(H,G)}{|N_G(H)| }.$$

\begin{prop}
	A finite  group $G$ satisfies
	(\ref{strana})
	for any positive integer $t$ if and only if $\tau(n)=0$ for any $\in \mathbb N.$
\end{prop}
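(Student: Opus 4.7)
The proof is essentially a Vandermonde-type argument. Here is how I would organize it.

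\textbf{Step 1: Rewrite (\ref{presunti}) by collecting terms of equal order.} As noted in the text, (\ref{strana}) is equivalent to (\ref{presunti}). Group the contributions in (\ref{presunti}) according to the order $|H|=n$, so that
$$\sum_{[H]\in\mathcal T}\frac{(\mu(H,G)-\mu^*(H,G))|H|^t}{|N_G(H)|}=\sum_{n\in\mathbb N}\tau(n)\,n^t.$$
Thus (\ref{strana}) holds for every positive integer $t$ if and only if $\sum_{n\in\mathbb N}\tau(n)\,n^t=0$ for every $t\geq 1$.

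\textbf{Step 2: The easy direction.} If $\tau(n)=0$ for every $n\in\mathbb N$, then the sum above is identically zero for all $t$, so (\ref{strana}) is automatic.

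\textbf{Step 3: The reverse direction via Vandermonde.} Since $\mathcal T$ is a finite set, there are only finitely many positive integers $n_1<n_2<\cdots<n_k$ for which $\tau(n_i)\neq 0$ is even a possibility; for all other $n$, $\tau(n)=0$ by definition. Assuming $\sum_{i=1}^k \tau(n_i)\,n_i^t=0$ for every $t\geq 1$, specialise to $t=1,2,\dots,k$. This yields the linear system
$$\begin{pmatrix} n_1 & n_2 & \cdots & n_k \\ n_1^2 & n_2^2 & \cdots & n_k^2 \\ \vdots & \vdots & \ddots & \vdots \\ n_1^k & n_2^k & \cdots & n_k^k \end{pmatrix}\begin{pmatrix}\tau(n_1)\\ \tau(n_2)\\ \vdots\\ \tau(n_k)\end{pmatrix}=\begin{pmatrix}0\\0\\\vdots\\0\end{pmatrix}.$$
Factoring $n_i$ from the $i$-th column writes the coefficient matrix as $V\cdot D$, where $D=\operatorname{diag}(n_1,\dots,n_k)$ and $V=(n_i^{t-1})_{t,i}$ is the standard Vandermonde matrix. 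Since the $n_i$ are distinct positive integers, $\det V=\prod_{i<j}(n_j-n_i)\neq 0$ and $\det D\neq 0$, so the system has only the trivial solution. Hence $\tau(n_i)=0$ for each $i$, and therefore $\tau(n)=0$ for every $n\in\mathbb N$.

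The argument is essentially routine; there is no serious obstacle, since the key observation is simply that the exponents $n^t$ as $t$ varies span the space of functions on any finite set of distinct positive integers.
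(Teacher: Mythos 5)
Your proof is correct, and it is the standard argument: the paper itself states this proposition without proof, treating it as an immediate consequence of (\ref{presunti}), and your Vandermonde-determinant argument is exactly the routine verification being left to the reader. The only cosmetic point worth noting is that the ``only if'' direction needs the hypothesis for $k$ distinct values of $t$ (where $k=|\{n : \tau(n)\neq 0 \text{ possibly}\}|$), which you correctly extract from the assumption that (\ref{strana}) holds for \emph{every} positive integer $t$.
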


\begin{question} Does $\tau(n)=0$ for any $n\in \mathbb N$ implies $\mu^*(H,G)=\mu(H,G)$ for any $H\leq G?$
\end{question}

For any $H\leq G,$ consider 
$$\alpha(H,t)=\frac{|H|^{t-1}|G||G^\prime  H|}{|G^\prime N_G(H)|},\quad \beta(H,t)=\alpha(H,t)-\omega(H,t).$$
Let $\mathcal C^*(G)=\{[H] \in \mathcal C(H) \mid [H] < [G] \text { and  }\lambda(H,G)\neq 0\}.$
If $G$ satisfies the $(\lambda,\mu)$-property, then for any $t\in \mathbb N,$ the vector $$\beta_t(G)=(\beta(H,t))_{[H] \in \mathcal C^*(G)}$$
is an integer solution of the linear equation
\begin{equation}\label{equabeta}
\sum_{\mathcal [H] \in C^*(G)}\lambda(H,G)x_H=0.
\end{equation}
One could investigate about the dimension of the vector space generate by the vectors $\beta_t(G),$ $t\in \mathbb N.$
For example, if $G=A_5$, then
we may order the elements of $\mathcal C^*(G)$ so that
$H_1=A_4,$ $H_2=S_3,$ $H_3=D_5,$ $H_4=\langle (1,2,3) \rangle,$
$H_5=\langle (1,2)(3,4)\rangle,$ $H_6=1.$
Then (\ref{equabeta}) can be written in the form
$$\sum_{[H] \in \mathcal C^*(G)}\lambda(H,G)x_H=-x_{H_1}-x_{H_2}-x_{H_3}+x_{H_4}+2x_{H_5}-x_{H_6}$$ and
$$\begin{aligned}
\beta_1(G)&=(24,24,20,39,44,59),\\
\beta_2(G)&=(84,54,50,99,74,59),\\
\beta_3(G)&=(264,114,110,279,134,59),\\
\beta_4(G)&=(804,234,230,819,254,59),\\
\beta_5(G)&=(2424,474,470,2439,494,59),\\
\beta_6(G)&=(7284,954,950,7299,974,59).
\end{aligned}$$
The first three vectors $\beta_1(G), \beta_2(G), \beta_3(G)$ are linearly independent, while $\beta_4(G),$ $\beta_5(G)$ and $\beta_6(G)$ can be obtained as linear combinations of $\beta_1(G), \beta_2(G), \beta_3(G).$

The situation is completely different when $G=S_3$. We may order the elements of $C^*(G)$ so that  $H_1=\langle (1,2,3)\rangle,$
$H_2=\langle (1,2) \rangle,$ $H_3=1.$  The equation (\ref{equabeta}) has in this case the form $x_{H_1}+x_{H_2}-x_{H_3}=0$ and
$\beta_t(G)=(0,2,2)$ independently on the choice of $t.$

Some properties of the vectors $\beta_t(G)$ are described in the following propositions.
\begin{prop}
	If $H \in \mathcal C^*(G),$ then $\beta(H,t)\geq 0$ with equality if and only if $G^\prime \leq H.$ In particular $\beta_t(G)$ is a non-negative vector and $\beta_t(G)=0$ if and only if $G$ is nilpotent.
\end{prop}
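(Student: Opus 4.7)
The plan is to show that $\beta(H,t)$ is the difference of two quantities, an explicit factorization of $\alpha(H,t)$ and the trivial upper bound on $\omega(H,t)$, each of which admits a transparent equality case. First I would rewrite $\alpha(H,t)$ using the standard identities $|G^\prime H|=|G^\prime||H|/|G^\prime\cap H|$ and $|G^\prime N_G(H)|=|G^\prime||N_G(H)|/|G^\prime\cap N_G(H)|$, together with the obvious $G^\prime\cap N_G(H)=N_{G^\prime}(H)$, to obtain
$$\alpha(H,t)=|H|^{t}\cdot[G:N_G(H)]\cdot[N_{G^\prime}(H):G^\prime\cap H].$$
The second factor of this expression is at least $1$, with equality iff every element of $G^\prime$ normalizing $H$ already lies in $H$.

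Next I would bound $\omega(H,t)=|\bigcup_{g\in G}(H^{g})^{t}|$ by the naive union estimate: the union is taken over the $[G:N_G(H)]$ distinct conjugates of $H$, and each $(H^{g})^{t}$ has size $|H|^{t}$, so $\omega(H,t)\le[G:N_G(H)]\cdot|H|^{t}$. Equality would require the sets $(H^{g})^{t}$ to be pairwise disjoint; since each of them contains the tuple $(1,\ldots,1)$, this forces $[G:N_G(H)]=1$, i.e.\ $H\trianglelefteq G$. Combining the two estimates gives
$$\beta(H,t)=[G:N_G(H)]|H|^{t}\bigl([N_{G^\prime}(H):G^\prime\cap H]-1\bigr)+\bigl([G:N_G(H)]|H|^{t}-\omega(H,t)\bigr)\ge 0.$$

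For the equality case, both summands above must vanish. The second gives $H\trianglelefteq G$, and then $N_{G^\prime}(H)=G^\prime$, so the first forces $G^\prime\cap H=G^\prime$, i.e.\ $G^\prime\le H$. Conversely, $G^\prime\le H$ makes $H$ normal in $G$, so $N_G(H)=G$, $N_{G^\prime}(H)=G^\prime\cap H=G^\prime$, and one checks directly that $\alpha(H,t)=|H|^{t}=\omega(H,t)$. This proves the first assertion.

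For the ``in particular'' part, note that for every maximal subgroup $M$ of $G$ one has $\lambda(M,G)=-1\neq 0$, so $[M]\in\mathcal C^{*}(G)$. If $\beta_{t}(G)=0$, then $\beta(M,t)=0$ for every such $M$, hence $G^\prime\le M$ for every maximal subgroup $M$; this means every maximal subgroup of $G$ is normal, which is equivalent to $G$ being nilpotent. Conversely, if $G$ is nilpotent then every maximal subgroup contains $\frat(G)\supseteq G^\prime$, so every intersection of maximal subgroups contains $G^\prime$, and hence by Proposition \ref{intmax} (applied with $A=\inn(G)$) every $H$ with $\lambda(H,G)\neq 0$ satisfies $G^\prime\le H$, yielding $\beta_{t}(G)=0$. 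The main thing to verify carefully is the factorization of $\alpha(H,t)$ and the observation that the only source of equality in the union bound is normality of $H$; everything else is immediate.
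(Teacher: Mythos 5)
Your proof is correct and follows essentially the same route as the paper: rewrite $\alpha(H,t)$ as $|H|^{t}\,[G:N_G(H)]\,[N_{G^\prime}(H):G^\prime\cap H]$ and compare it with a union bound on $\omega(H,t)$ over the conjugates of $H$. The only differences are cosmetic --- the paper uses the slightly sharper bound $\omega(H,t)\le [G:N_G(H)](|H|^{t}-1)+1$ (accounting for the shared identity tuple), which makes the equality analysis immediate, whereas you use the naive bound and recover the equality case via the disjointness argument; you also spell out the ``in particular'' clause (maximal subgroups and $G^\prime\le\frat(G)$), which the paper leaves to the reader, and you do so correctly.
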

\begin{proof}
Notice that $\omega(H,t)\leq |G:N_G(H)|(|H|^t-1)+1.$
So
$$\begin{aligned}\beta(H,t) &\geq \frac{|H|^{t-1}|G||G^\prime  H|}{|G^\prime N_G(H)|}
-|G:N_G(H)|(|H|^t-1)-1\\&=
|H|^{t}|G:N_G(H)|\frac{|G^\prime \cap N_G(H)|}{|G^\prime\cap  H|}
-|G:N_G(H)|(|H|^t-1)-1\geq 0
\end{aligned}$$
with equality if and only if $H \geq G'.$
\end{proof}
\begin{prop}
	The vector $\beta_t(G)$ is independent on the choice of $t$ if and only if $G$ is a nilpotent group or a primitive Frobenius group, with cyclic Frobenius complement.
\end{prop}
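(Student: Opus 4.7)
The plan is to first characterize, pointwise in $H$, when a single entry $\beta(H,\cdot)$ of $\beta_t(G)$ is constant in $t$, and then translate the uniform condition over $\mathcal C^*(G)$ into structural information about $G$. The starting point is the identity $\alpha(H,t) = |H|^t |G{:}N_G(H)|\cdot |G'\cap N_G(H)|/|G'\cap H|$ from the proof of the previous proposition, together with the inclusion-exclusion bound $\omega(H,t) \leq |G{:}N_G(H)|(|H|^t-1)+1$ (tight exactly when distinct $G$-conjugates of $H$ intersect pairwise trivially). These combine to give
\[
\beta(H,t) \;\geq\; |G{:}N_G(H)|\,|H|^t\!\left(\tfrac{|G'\cap N_G(H)|}{|G'\cap H|}-1\right) + |G{:}N_G(H)|-1,
\]
and for $|H|>1$ the right-hand side grows strictly in $t$ unless its $|H|^t$-coefficient vanishes. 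Hence $\beta(H,\cdot)$ is constant in $t$ if and only if \textbf{(a)} $G'\cap N_G(H) = G'\cap H$ and \textbf{(b)} distinct $G$-conjugates of $H$ intersect pairwise trivially, in which case $\beta(H,t) = |G{:}N_G(H)|-1$ for all $t$.

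For the sufficient direction, if $G$ is nilpotent then every maximal subgroup of $G$ contains $G'$, every $H\in \mathcal C^*(G)$ contains $G'$, and by the previous proposition $\beta(H,t)=0$; so $\beta_t(G)$ is the zero vector. If $G$ is primitive Frobenius with kernel $N$ and cyclic complement $M$, then $G'=N$, the subgroup $M$ is maximal and self-normalizing with $M\cap M^g=1$ for $g\notin M$, and the Dedekind identity $M\cap NM_0=M_0$ combined with the cyclicity of $M$ forces $\mathcal C^*(G)=\{[N],[M],[1]\}$; conditions (a), (b) are then verified directly, producing $\beta(N,t)=0$ and $\beta(M,t)=\beta(1,t)=|N|-1$, all constant.

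For the converse, assume $\beta_t(G)$ is constant, so (a), (b) hold for every $[H]\in \mathcal C^*(G)$ with $|H|>1$. If every maximal of $G$ contains $G'$, then $G$ is nilpotent. Otherwise pick a maximal $M$ with $G'\not\leq M$; since $\lambda(M,G)=-1\neq 0$, conditions (a), (b) apply to $M$. If $M\triangleleft G$, then maximality gives $N_G(M)=G$ and then (a) forces $G'\leq M$, contradicting the choice of $M$; hence $N_G(M)=M$. Combined with (b), Frobenius's theorem yields a kernel $N\triangleleft G$ with $G=NM$ and $N\cap M=1$, so $G$ is a Frobenius group with complement $M$. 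Primitivity is immediate from the maximality of $M$: any proper nontrivial $M$-invariant $N_1<N$ would give $M<N_1M<G$.

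The main obstacle—and the decisive last step—is to force the cyclicity of $M$. The plan is to apply (a), (b) to subgroups $H\leq M$ that still belong to $\mathcal C^*(G)$. Since non-identity elements of $N$ centralize no nontrivial element of $M$ (Frobenius), for such $H$ one has $N_G(H)=N_M(H)$, and $G'\cap M=M'$ because $G'=NM'$; hence (a) becomes $M'\cap N_M(H)=M'\cap H$ and (b) says that $H$ is a trivial-intersection subgroup of $M$. The Dedekind identity $M\cap NM_0=M_0$ then implies that intersections of maximals of $G$ contained in $M$ correspond exactly to intersections of maximals of $M$, so the constraints propagate down the subgroup lattice of $M$. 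Invoking the Zassenhaus structure theorem for Frobenius complements (odd-order Sylows cyclic, the $2$-Sylow cyclic or generalized quaternion), the plan is to exhibit, for any non-cyclic $M$, a witness $H\in\mathcal C^*(G)$—typically a characteristic subgroup of a non-cyclic Sylow or a non-normal subgroup producing overlapping $M$-conjugates—where (a) or (b) fails, forcing $M$ to be cyclic. Extracting cyclicity purely from these local intersection conditions is the hardest part of the argument.
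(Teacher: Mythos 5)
Your pointwise criterion---$\beta(H,\cdot)$ is constant in $t$ if and only if \textbf{(a)} $G'\cap N_G(H)=G'\cap H$ and \textbf{(b)} distinct $G$-conjugates of $H$ intersect pairwise trivially---is correct, and it packages more systematically the estimates the paper uses ad hoc; your reduction to ``nilpotent or primitive Frobenius with complement $M$'' (maximal non-normal $M$, condition (b) giving the TI property, Frobenius's theorem) is also essentially the paper's route. One repairable error: in the sufficiency direction the claim $\mathcal C^*(G)=\{[N],[M],[1]\}$ is false (already for $G=C_5\rtimes C_4$ the class of $C_2=M\cap NM_0$ is an intersection of maximal subgroups with $\lambda(C_2,G)=1\neq 0$). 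You must verify (a) and (b) for every $1\neq K\leq M$; this does work, since $N_G(K)=M$ and $K^{v}\cap K^{w}\leq M^{v}\cap M^{w}=1$, giving $\beta(K,t)=|N|-1$ for all such $K$, exactly as the paper computes.

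The decisive gap is that you never prove cyclicity of $M$, and the plan you sketch---exhibiting a witness $H\leq M$ where (a) or (b) fails---cannot succeed. For any $1\neq K\leq M$ with $[K]\in\mathcal C^*(G)$ one has $N_G(K)=N_M(K)$ and $G'\cap M=M'$, and $K$ is automatically TI in $G$ because $M$ is; so (a) reduces to $M'\cap N_M(K)=M'\cap K$ and (b) is free. For $M=Q_8$ acting fixed-point-freely and irreducibly on $V=C_3^2$, every relevant $K$ (namely $Z(Q_8)$, the three $C_4$'s, and $Q_8$ itself) satisfies both conditions, so no subgroup of the complement detects non-cyclicity. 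The paper's witness lives elsewhere: it is the kernel $V$ itself, for which $\alpha(V,t)=|V|^{t-1}|G'|=|V|^{t}|M'|$ and $\omega(V,t)=|V|^t$, whence $\beta(V,t)=|V|^t\left(|M'|-1\right)$; constancy forces $M'=1$, and an abelian Frobenius complement is cyclic. You should be aware, however, that this step presupposes $[V]\in\mathcal C^*(G)$, i.e.\ $\lambda(V,G)=\lambda(1,M)\neq 0$, which the paper does not verify and which fails exactly in the $M=Q_8$ case (where $\mu(1,Q_8)=0$, hence $\lambda(1,Q_8)=0$). So any complete write-up must either justify $\lambda(1,M)\neq 0$ for the complements that survive to this point, or treat groups such as $C_3^2\rtimes Q_8$ separately; your closing remark that extracting cyclicity is the hardest part is accurate, but the route through the subgroup lattice of $M$ is the wrong one.
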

\begin{proof}
	By the previous proposition, if $G$ is nilpotent then $\beta_t(G)$ is the zero vector for any $t\in \mathbb N,$ so we may assume that $G$ is not nilpotent. Assume that $\beta_t(G)$ is
	independent on the choice of $t.$
Let $H$ be a maximal non-normal subgroup of $G$. Then
$\alpha(H,t)=|H|^t\cdot u$ with $u=|G:H|$. Let
$H_1,\dots,H_u$ be the conjugates of $H$ in $G.$ For any $J\subseteq \{1,\dots,u\},$ let $\alpha_J=|\cap_{j\in J}H_j|.$ Then
$$\beta(H,t)=\sum_{J \neq  \{1,\dots,u\}}(-1)^{|J|+1}|\alpha_J|^t.$$
 We must have $\alpha_J=1$ for every choice of $J,$ otherwise $\lim_{t\to \infty}\beta(H,t)=\infty.$
Hence $H$ is a Frobenius complement and, since $H$ is a maximal subgroup, the Frobenius kernel $V$ is an irreducible $H$-module. 
Since $\beta(V,t)=|V|^t(|H^\prime|-1)$  does not depends on $t$, $H$ must be abelian, and consequently cyclic. So if $\beta_t(G)$ is independent of the choice of $t,$ then $G$ is a primitive Frobenius group with a cyclic Frobenius complement. Conversely assume $G=V\rtimes H,$ where $H$ is cyclic and $V$ and irreducible $H$-module.
If $X\in \mathcal C^*(G)$, then $\lambda(X,G)\neq 0,$ so $X$ is an intersection of maximal subgroups of $G$ and therefore either $V=G^\prime \leq X,$ or $X$ is conjugate to a subgroup of $H.$ In the first case $\beta(H,t)=0$. Assume
$X=K^v$ for some $K\leq H$ and $v \in V.$
Then $\beta(H,t)=|K|^t|V|-\omega(K,t)=
|K|^t|V|-(|V|(|K|^t-1)+1)=|V|-1.$
\end{proof}


\begin{thebibliography}{99}

\bibitem{brown} K. S. Brown,  The coset poset and probabilistic zeta function of a finite group, J. Algebra 225 (2000), no. 2, 989--1012.
\bibitem{crapo1} H. Crapo, The M\"{o}bius function of a lattice, J. Combinatorial Theory 1 (1966), 126--134.
\bibitem{crapo2} H. Crapo, M\"{o}bius inversion in lattices, Arch. Math. (Basel) 19 (1968) 595–-607.	
\bibitem{GAP4}
The GAP~Group, \emph{Gap -- groups, algorithms, and programming, version
	4.11.1}, 2021, {https://www.gap-system.org}.
\bibitem{g2}
W. Gasch{\"u}tz, Zu einem von B. H. und H. Neumann gestellten
		Problem, Mathematische Nachrichten {14} (1955), no.~4-6, 249--252.4
		\bibitem{dvz} F. Dalla Volta and G. Zini, On two M\"{o}bius functions for a finite non-solvable group,   Comm. Algebra 49 (2021), no. 11, 4565--4576.
		\bibitem{hall} P. Hall, The Eulerian functions of a group, Quart. J. Math. Oxford Ser. 7 (1936), 134–151.
\bibitem{isac} T. Hawkes, I. M. Isaacs and M. \"{O}zaydin, On the Möbius function of a finite group. Rocky Mountain J. Math. 19 (1989), no. 4, 1003--1034.
\bibitem{lumo} A. Lucchini,  On the subgroups with non-trivial Möbius number, J. Group Theory 13 (2010), no. 4, 589--600.
\bibitem{pah} H. Pahlings, On the M\"{o}bius function of a finite group, Arch. Math. (Basel) 60 (1) (1993), 7--14.
\end{thebibliography}
\end{document}